\def\BBox{\kern  -0.2cm\hbox{\vrule width 0.2cm height 0.2cm}}
\newtheorem{lemma}{Lemma}[section]
\newtheorem{theorem}{Theorem}[section]
\newtheorem{definition}{Definition}[section]
\newtheorem{remark}{Remark}[section]
\newtheorem{keyword}{Keyword}
\title{New families of small regular graphs of girth 5}
\author{E. Abajo$^{1}$, G. Araujo-Pardo$^{2}$, C. Balbuena$^{3}$,
M. Bendala$^{1}$
\thanks{ \footnotesize{\em Email addresses:} ~ eabajo@us.es (E. Abajo),\, \, ~ garaujo@matem.unam.mx (G. Araujo), \, \ ~ m.camino.balbuena@upc.edu (C. Balbuena), \, \, ~ mbendala@us.es (M. Bendala)}
 \\[2ex]
\\ $^1${\footnotesize Departamento de Matem\'{a}ticas, Universidad de Sevilla, Spain.}\\
$^2${\footnotesize Instituto de Matem\'{a}ticas, Universidad Nacional Aut\'{o}noma de M\'{e}xico,} \\
{\footnotesize M\'{e}xico D. F., M\'exico }\\
$^3${\footnotesize Departament de Matem\`atica Aplicada III, Universitat
Polit\`ecnica de Catalunya, }\\
{\footnotesize Campus Nord, Edifici C2, C/ Jordi Girona 1 i 3 E-08034 Barcelona,
Spain.}
}
\begin{document}
\maketitle
\begin{abstract}
In this paper we are interested in the {\it{Cage Problem}} that consists in constructing regular graphs of given girth $g$ and minimum order. We focus on girth $g=5$, where cages are known only for degrees  $k \le 7$. We construct regular graphs of girth $5$ using techniques exposed by Funk [Note di Matematica. 29 suppl.1, (2009) 91 - 114] and Abreu et al. [Discrete Math. 312 (2012), 2832 - 2842] to obtain the best upper bounds known hitherto.  The tables given in the introduction show the improvements obtained with our results.

\end{abstract}

\begin{keyword}
Small regular graphs, cage,   girth, amalgam.\\
\emph{AMS subject classification:}  05C35,  05C38.
\end{keyword}


\section {Introduction}

All the graphs considered are finite and simple. Let $G$ be a graph with vertex set $ V =V(G)$ and edge set $E=E(G)$. The {\it{girth}} of a graph $G$ is the size $g = g(G)$ of a shortest cycle.
The degree of a vertex $v \in  V$ is the number of vertices adjacent to $v$. A graph is called $k$-regular if all its vertices have the same degree $k$, and bi-regular or $(k_1,k_2)$-regular if all its vertices have either degree $k_1$ or $k_2$. A $(k, g)$-graph is a $k$-regular graph with girth $g$ and a $(k, g)$-cage is a $(k, g)$-graph with the fewest possible number of vertices; the order of a $(k,g)$-cage is denoted by $n(k,g)$. Cages were introduced by Tutte \cite{T47} in $1947$ and their existence was proved by Erd\H{o}s and Sachs \cite{ES63} in $1963$ for any values of regularity and girth. The lower bound on the number of vertices of a $(k,g)$-graph is denoted by $n_0(k,g)$, and it is calculated using the distance partition with respect either a vertex (for odd $g$), or  and edge (for even $g$):
$$n_0(k, g) =
\left\{ \begin{array}{ll}
1 + k + k(k - 1) + \cdots + k(k - 1)^{(g-3)/2} & \makebox{if} \, g \, \makebox{is odd}; \\
2(1 + (k - 1) + \cdots + (k - 1)^{g/2 - 1})    & \makebox{if} \, g \, \makebox{is even.}
\end{array}\right.
$$

\noindent Obviously a graph that attains this lower bound is  a $(k,g)$-cage. Biggs \cite{Bi96} calls {\em excess} of a $(k,g)$--graph $G$ the difference $|V (G)| - n_0(k,g)$. There has been intense work related with {\it{The Cage Problem}}, focussed on constructing the smallest $(k,g)$-graphs (for a complete survey of this topic see \cite{ EJ08}).

\noindent In this paper we are interested in the cage problem for $g=5$, in this case $n_0(k,5)=1+k^2$. It is well known that this bound is attained for $k=2, 3, 7$ and  perhaps  for $k=57$ (see \cite{Bi96}) and that for $k=4,5,6$, the known graphs of minimum order are cages (see \cite{M99,KW79,R64,R69,YZ89,W73,W78,W82}).

\noindent  J{\o}rgensen~\cite{J05} establishes that $n( k, 5) \le 2 (q-1) ( k-2) $ for every odd prime power $q \ge 13$ and $k \le q+3$.  Abreu et al.   \cite{AABL12} prove that $n( k, 5) \le 2( q k -3 q -1)  $ for any prime  $q  \ge 13$ and $k\leq q+3$, improving  Jorgensen's bound except for $k=q+3$ where both coincide.

\noindent In \cite{F09} Funk uses a technique that consists in finding regular graphs of girth greater or equal than five and performing some operations of amalgams and reductions in the (bipartite) incidence graph, also called Levi Graph of elliptic semiplanes of type $C$ and $L$ (see \cite{AFLN3,Demb,F09}).
In this paper we improve some results of Funk finding the best possible regular graphs to amalgamate which allows us to obtain new better upper bounds. To do that, we also use the techniques given in \cite{AABL12,AABLL13}  where the authors not only amalgamate regular graphs, but also bi-regular graphs. In this paper new $(k,5)$-graphs are constructed for $17\le k\le 22$ using the incidence graphs of elliptic semiplanes of type $C$.   The new upper bounds appear  in the last column of Table \ref{table2}, which also shows the current values for $8 \le k \le 22$.
To evaluate our achievements,  we follow  the notation in \cite{EJ08,F09}, and let  $rec(k,5)$ denote  the smallest  currently  known order of a $k$-regular graph of girth $5$. Hence $n(k,5)\le rec(k,5)$.

{

\begin{table}[h!]
\scriptsize
\begin{center}

\begin{tabular}{
 |c||    @{$ \ $}  c @{$\ $}   |   @{\ } c @{\ }   |    @{\ } c @{\ }   |   @{\ } c @{\ }  |   }
\hline
 $ k  $ &$rec(k,5)$  &  Due to  & Reference   & New   $rec(k,5)$              \\
       \hline \hline

      8   &  80 & Royle, J{\o}rgensen &  \cite{Royle, J05} &      \\
  \hline
      9   &  96 &  J{\o}rgensen  &  \cite{J05} &    \\
  \hline
    10   &  124 & Exoo  &  \cite{Exoo} &    \\
  \hline
    11   &  154 &  Exoo  &  \cite{Exoo} &   \\
  \hline
    12   & 203 &  Exoo  &  \cite{Exoo} &  \\
  \hline
    13   & 230 &  Exoo  &  \cite{Exoo} &    \\
  \hline
    14   & 284 &  Abreu et al.  &  \cite{AABL12} &     \\
  \hline
    15   & 310 &  Abreu et al. &  \cite{AABL12} &     \\
  \hline
    16   & 336 &  J{\o}rgensen&  \cite{J05} &     \\
  \hline
      17   &  448 & Schwenk&  \cite{S08} & \emph{436}    \\
       \hline
     18  & 480 & Schwenk &  \cite{S08} & \emph{468}   \\
  \hline
     19  & 512 & Schwenk & \cite{S08}  & \emph{500}   \\
  \hline
     20 & 572 &   Abreu et al.  & \cite{AABL12} & \emph{564}    \\
  \hline
     21  & 682 & Abreu et al.  &  \cite{AABL12}    & \emph{666}   \\
  \hline
     22  & 720 &  J{\o}rgensen  &  \cite{J05}   & \emph{704}   \\
       \hline
\end{tabular}
 \caption{\small   \small Current and our new values of $rec(k,5)$  for $8 \le k \le 22$.}
  \label{table2}
\end{center}
\end{table}
}

\noindent The bounds obtained by Funk using elliptic semiplanes of type $L$ on $n(k,5)$ for $23 \le k \le 31 $ remain untouched whereas, for $32\le k\le 52$, we obtain the best possible regular graphs to amalgamate in this type of incidence graphs obtaining, in consequence, the best upper bounds known hitherto (see Table~\ref{table3}).

{
\begin{table}[h!]
\scriptsize
\begin{center}
\begin{tabular}  {
  |c||    @{$\,$}  c @{$\,$}   |   @{\,} c @{\,}   |    @{\,} c @{\,}   |   @{\,} c @{\,}  |  }
\hline
 $ k  $   &         $rec(k,5)$        &        Due to  &          Reference            &  New   $rec(k,5)$  \\
       \hline \hline
   32   & 1680 &   J{\o}rgensen     &      \cite{ J05} &          \emph{1624}    \\
       \hline
     33  & 1856  &  Funk   &  \cite{F09}&       \emph{ 1680}      \\
 \hline
     34 & 1920 & J{\o}rgensen & \cite{J05}             & \emph{1800}    \\
  \hline
     35 & 1984 &   Funk  & \cite{F09}            &  \emph{1860}    \\
  \hline
     36  & 2048 & Funk &   \cite{F09}            &  \emph{1920}      \\
       \hline
37  &  2514  &   Abreu et al     &  \cite{AABL12}   &   \emph{2048}     \\
  \hline
      38   &   2588    &       Abreu et al     &     \cite{AABL12}               &     \emph{2448}         \\
  \hline
      39   &    2662   &      Abreu et al     &     \cite{AABL12}               & \emph{2520} \\
  \hline
   40      &  2736      &   J{\o}rgensen &      \cite{ J05}                     & \emph{2592}   \\
  \hline
    41   &  3114 &  Abreu et al     &     \cite{AABL12}              & \emph{2664}   \\
  \hline
    42   & 3196 &  Abreu et al     &     \cite{AABL12}          & \emph{2736}   \\
  \hline
   43   & 3278   &  Abreu et al     &     \cite{AABL12}             &\emph{3040}\\
  \hline
    44   & 3360 &   J{\o}rgensen & \cite{ J05}         & \emph{3120}   \\
  \hline
    45  &  3610  & Abreu et al     &     \cite{AABL12}       & \emph{3200}  \\
  \hline
    46   & 3696 &   J{\o}rgensen & \cite{ J05}           &   \emph{3280}      \\
  \hline
   47   & 4134 &   Abreu et al     &     \cite{AABL12}           &      \emph{3360}     \\
   \hline
   48  & 4228  &   Abreu et al     &     \cite{AABL12}            &    \emph{3696}      \\
  \hline
     49 & 4322 &  Abreu et al     &     \cite{AABL12}         & \emph{4140}     \\
  \hline
     50 & 4416 &   J{\o}rgensen & \cite{J05}              &  \emph{4232}    \\
  \hline
     51  & 4704 &  J{\o}rgensen &  \cite{J05}         &   \emph{4324}     \\
 \hline
    52  &  4800  &    J{\o}rgensen & \cite{J05}                   &   \emph{4416 }   \\
   \hline
\end{tabular}
 \caption{\small   \small   Current and our new values of $rec(k,5)$  for $32 \le k \le 52$.}
  \label{table3}
\end{center}
\end{table}

\noindent Finally, when $q \ge  49$ is a prime power, the search for  $6$-regular  suitable pairs of graphs  has allowed us to  establish  the two following general results. Note that the bounds are different depending on the parity of $q$.

\begin{theorem}\label{th11}
 Given an integer $k \ge 53,$ let $q$ be the lowest odd prime power, such that $k \leq q+6$. Then  $n(k,5) \leq 2(q-1)(k-5)$.
\end{theorem}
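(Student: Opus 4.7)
My plan is to follow the elliptic-semiplane amalgamation template used by Funk~\cite{F09} and by Abreu et al.~\cite{AABL12}, now with a \emph{6-regular} suitable pair playing the role of the smaller-degree amalgams considered there. Let $\Gamma_q$ be the incidence (Levi) graph of the elliptic semiplane of type $L$ of order $q$: a bipartite $(q+1)$-regular graph of girth $6$ in which each side is partitioned canonically into $q+1$ \emph{parallel classes} of $q-1$ vertices. The hypothesis $k \ge 53$ forces $q \ge 47$, and the suitable 6-regular pairs driving the amalgamation become available once $q \ge 49$, so the construction operates in the intended range.

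The reduction step deletes $q+6-k$ parallel classes from each side of $\Gamma_q$; this is possible because $k \le q+6$. Every surviving vertex loses exactly $q+6-k$ of its $q+1$ neighbors, so the reduced bipartite graph $\Gamma'_q$ is $(k-5)$-regular, still of girth at least $6$, and has exactly $2(q-1)(k-5)$ vertices --- which is the order claimed in the theorem.

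To raise the common degree from $k-5$ to $k$, I would amalgamate inside each surviving parallel class, on both sides of the bipartition, a $5$-regular graph of girth $5$ on its $q-1$ vertices; concretely, the construction uses a \emph{6-regular suitable pair} $(H_1,H_2)$ of Cayley graphs built from connection sets in the multiplicative group of $\mathrm{GF}(q)$, with $H_1$ glued into each class on one side and $H_2$ into each class on the other. Since $q-1 \ge 48$ exceeds the Moore bound $n_0(5,5)=26$, such graphs certainly exist; the odd-prime-power hypothesis provides the algebraic structure needed to make a uniform choice of connection sets.

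The main obstacle will be proving that the amalgamated graph has girth exactly $5$. Short cycles can only arise by mixing bipartite edges of $\Gamma'_q$ with internal edges of the inserted $H_i$: cycles lying wholly inside a class have length at least $5$ by construction, and cycles using only bipartite edges have length at least $6$. A mixed $3$-cycle would require two vertices of a single parallel class to share a $\Gamma'_q$-neighbor, which is ruled out because distinct vertices of a parallel class have disjoint neighborhoods in the semiplane. The critical case is a mixed $4$-cycle consisting of an internal edge $uv$ in some class together with a length-$3$ bipartite path from $u$ to $v$ in $\Gamma'_q$; eliminating such $4$-cycles forces the connection sets defining $H_1$ and $H_2$ to avoid certain multiplicative configurations inherited from the semiplane's coordinatization. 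Verifying that a 6-regular suitable pair with this extra compatibility exists uniformly for every odd prime power $q \ge 49$ is where the bulk of the technical work lies, and is the step I expect to be the real hurdle.
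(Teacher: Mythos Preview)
Your overall architecture matches the paper's: amalgamate a pair of graphs into the parallel classes of the Levi graph $L_q$ of the type-$L$ elliptic semiplane, then delete $q+6-k$ pairs of blocks to reach regularity $k$ on $2(q-1)(k-5)$ vertices. Two details are off, however. First, $L_q$ is $q$-regular (not $(q+1)$-regular): it has $q+1$ parallel classes of size $q-1$ on each side, but each vertex meets only $q$ of the opposite classes. Consequently the graphs you amalgamate into the classes must be \emph{$6$-regular}, not $5$-regular, to reach degree $q+6$ before deletion; your text oscillates between the two. Second, and more substantively, the compatibility condition you describe vaguely as ``avoid certain multiplicative configurations'' is in fact very concrete: Funk's criterion is simply that the two $6$-regular graphs $G_0,G_1$ on $\mathbb{Z}_{q-1}$ have \emph{disjoint sets of Cayley colors} (i.e.\ disjoint difference sets $\{\pm(\alpha-\beta):\alpha\beta\in E(G_i)\}$). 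That single combinatorial condition is what kills all mixed $3$- and $4$-cycles.

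The genuine gap is that you offer no construction of such a pair, and your existence argument (``$q-1\ge 48$ exceeds the Moore bound $n_0(5,5)=26$'') is insufficient: it guarantees a single $5$-regular graph of girth $5$ on $q-1$ vertices, but says nothing about finding \emph{two} $6$-regular girth-$5$ graphs whose Cayley colors partition a subset of $\mathbb{Z}_{q-1}\setminus\{0\}$. This is exactly the ``real hurdle'' you anticipate, and it is the entire content of the paper's proof. The paper handles it by introducing an explicit family $S_{2t}(P,Q;k_1,\dots,k_w)$ of $(w+2)$-regular graphs with controlled Cayley colors, proving a girth-$5$ criterion for them via a span argument (so that for all $q\ge 79$ the fixed pair $S_{q-1}(8,16;1,-1,5,-13)$ and $S_{q-1}(2,4;3,-7,15,-21)$ works), and supplying ad~hoc relabelled pairs for the finitely many smaller $q\in\{49,53,59,61,67,71,73\}$. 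Without some such explicit family and a uniform verification, your outline remains a plan rather than a proof.
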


\begin{theorem}\label{th12}
Given an integer $k \ge 68$, let $q=2^m$ be the lowest even prime, such that $k \leq q+6$. Then  $n(k,5) \leq 2 q (k-6)$.
\end{theorem}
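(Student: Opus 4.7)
The strategy is to extend the $L$-type elliptic semiplane amalgam of \cite{F09}, upgraded with the bi-regular amalgam techniques of \cite{AABL12}, specialised to even prime-power orders $q=2^m$, thereby pushing the range of degrees from Funk's original $k\le q+3$ up to $k\le q+6$. The vertex count $2q(k-6)$ indicates that we shall build the graph by starting from a bipartite scaffold with $2q$ parallel classes of size $k-6$ on each side, and then enriching each class by a $6$-regular structure of girth $\ge 5$.

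Concretely, the construction would proceed in three steps. First, I would take $\Lambda_q$, the bipartite Levi graph of an elliptic semiplane of type $L$ of order $q$, and reduce it to a subgraph $\Lambda_q'$ by keeping exactly $k-6$ of the parallel classes on each side; the surviving graph has $2q(k-6)$ vertices, is $(k-6)$-regular, and still has girth $6$. Second, I would amalgamate $\Lambda_q'$ with a carefully chosen pair $(G_1,G_2)$ of $6$-regular graphs of girth $\ge 5$ on $k-6$ vertices, by attaching the edge set of $G_i$ within each surviving parallel class on side $i\in\{1,2\}$ in the spirit of \cite{AABL12}. This adds exactly $6$ edges at every vertex, yielding a $k$-regular graph on the same vertex set of size $2q(k-6)$. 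Third, I would use the fact that $q$ is even, so that arithmetic in $\mathrm{GF}(2^m)$ has no ``inverse of $2$'' issues, to exhibit a labelling of the incidences of $\Lambda_q'$ by $\mathrm{GF}(2^m)$ that is compatible with $G_1$ and $G_2$; this is precisely where the even-$q$ hypothesis enters, as opposed to the odd-characteristic case of Theorem~\ref{th11}, and it is what allows a $6$-regular auxiliary pair rather than only a $5$-regular one.

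Finally one must verify that the amalgam has girth exactly $5$. A closed walk of length $\le 4$ in the amalgam projects either to a closed walk in $\Lambda_q'$, which has girth $6$, or to a walk inside one of the $G_i$, which has girth $\ge 5$, with at most a bounded number of transitions between the two kinds of edges; a short case analysis on the sequence of transitions, using the labelling compatibility, rules out cycles of length $3$ and $4$. A $5$-cycle is inherited from $G_1$ or $G_2$, so the girth equals $5$. The main obstacle, and the technical heart of the argument, is the explicit production of the pair $(G_1,G_2)$ for every admissible $k\le q+6$: one must exhibit two $6$-regular graphs on $k-6$ vertices of girth $5$ whose edge labels are jointly consistent with the point-line incidences of the elliptic semiplane, and the threshold $k\ge 68$, equivalent to $q\ge 64$, is precisely the point at which $\mathrm{GF}(2^m)$ becomes large enough for such a pair to be constructed uniformly in $k$.
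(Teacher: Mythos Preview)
Your outline contains a structural misidentification that makes the construction fail as written. The paper proves Theorem~\ref{th12} using the elliptic semiplane of type~$C$, not type~$L$: for $q=2^m$ the Levi graph $C_q$ has $q$ parallel classes of size~$q$ on each side, and it is this block size~$q$, not $k-6$, that governs the auxiliary graphs. The amalgam graphs $G_0,G_1$ must be $6$-regular graphs of girth at least~$5$ on $q$ vertices, labelled by the additive group $\mathbb{F}_q\cong(\mathbb{Z}_2)^m$, with disjoint sets of Cayley colors; a copy of $G_0$ is glued into every point block and a copy of $G_1$ into every line block, producing a $(q+6)$-regular graph $\mathcal{C}_q(G_0,G_1)$ of order $2q^2$. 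One then deletes $q+6-k$ point blocks and $q+6-k$ line blocks to reach regularity~$k$ and order $2q(k-6)$. Your proposal has the roles of ``block size'' and ``number of blocks retained'' interchanged: you ask for $6$-regular graphs on $k-6$ vertices, but $k-6$ is the number of surviving blocks, not their size, and in general $k-6\neq q$. (With type~$L$ the blocks have size $q-1$, so your vertex count $2q(k-6)$ would not even come out right.)

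This matters because the heart of the proof is precisely the exhibition of a suitable pair $(G_0,G_1)$ on $q$ vertices. For $q=64$ the paper takes $G_0=\bar{S}_{64}(4,8;1,3,41,47)$ and an explicit relabelling $G_1$ of it, verifying by inspection that their Cayley colors in $(\mathbb{Z}_2)^6$ are disjoint. For $q=2^m$ with $m\ge 7$ it builds $G_0^m$ and $G_1^m$ as $2^{m-6}$ disjoint copies of the $q=64$ pair, one on each coset of $(\mathbb{Z}_2)^6$ inside $(\mathbb{Z}_2)^m$; the Cayley colors remain disjoint because they all lie in the embedded copy of $(\mathbb{Z}_2)^6$. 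The role of the hypothesis ``$q$ even'' is therefore not about inverting~$2$ but about the subgroup structure of the additive group $(\mathbb{Z}_2)^m$, which is what enables this copy-and-lift device. Your girth verification in the last paragraph is fine in spirit, but without a correct pair $(G_0,G_1)$ on $q$ vertices with disjoint Cayley colors there is nothing to apply it to.
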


\noindent Since the bounds of Theorem~\ref{th11} and Theorem~\ref{th12}, associated to primes $q=49$ and $q=64$, represent a considerable improvement to the current known ones, we give them explicitly.
{
 \begin{table}[h]
\scriptsize
\begin{center}
\begin{tabular}  {
  |c||    @{$\,$}  c @{$\,$}   |   @{\,} c @{\,}   |    @{\,} c @{\,}   |   @{\,} c @{\,}  |  }
\hline
 $ k  $   &         $rec(k,5)$        &        Due to  &          Reference              &  New  $rec(k,5)$  \\
       \hline \hline
   55   & 5510 &   Abreu et al    &      \cite{AABL12}          &       \emph{4800}   \\
       \hline
     70  & 8976  &  J{\o}rgensen & \cite{J05}           &   \emph{8192}     \\

   \hline
\end{tabular}
\end{center}
\end{table}

\noindent To finalize the introduction we want to empathize that Funk, in \cite{F09}, gives a pair of $4$-regular graphs of girth $5$ suitable for amalgamation in some specific incidence graphs of elliptic semiplanes and he posed the question about the existence of a pair of $5$-regular graphs with the same objective (obviously these graphs should have the same order that those given by Funk and also girth $5$), in this paper we exhibit the graphs which solve this problem.
Furthermore, let us notice that all the bounds on $n(k,5)$ contained in this paper are obtained constructively, that is, for each $k$, we construct a $(k,5)$-graph with order $new \ rec(k,5)$.



\section{Preliminaries}
A useful tool to construct $k$-regular graphs of girth 5  is the operation of \emph{amalgamation} on the incidence graph of an elliptic semiplane (J{\o}rgensen \cite{J05} 2005 and Funk \cite{F09} 2009).

\noindent For $q=p^m$ a prime power, consider the Levi graphs $C_q$ and $L_q$ of the so-called elliptic semiplanes of type \emph{C} and \emph{L}, respectively. Recall that the semiplane of type $\emph{C}$ is obtained from the projective plane over the field $\mathbb{F}_q$ by deleting a point and all the lines incident with it together with all the points that belong to one of these lines, and that the Levi graph $C_q$ is bipartite, $q$-regular and has $2q^2$ vertices which corresponds in the elliptic semiplane to $q^2$ points and $q^2$ lines both partitioned into $q$ parallel classes or blocks of $q$ elements each. On the other hand, the semiplane of type  $\emph{L}$ is obtained by deleting from the projective plane a point and all the lines incident with it together with a different line and all its points, here the Levi graph of $L_q$ is also bipartite, $q$-regular and has $2(q^2-1)$ vertices of which $q^2-1$ are points and $q^2-1$ are lines in the elliptic semiplane, both partitioned into $q+1$ parallel classes of $q-1$ elements each.

\noindent The construction of our new graphs consists in finding regular and bi-regular graphs of girth greater or equal than five and performing some operations of amalgams and reductions in $C_q$ or $L_q$. 
In \cite{J05}, J{\o}rgensen exploits these ideas and proves that two graphs are suitable for amalgamation (one of them in each block of points and the other in each block of lines) if they have disjoint sets of Cayley colors.

\noindent In \cite{AABL12}   these ideas are also used to construct graphs using the elliptic semiplane of type {\emph{C}}, and the main Theorem of \cite{AABL12} was refined in \cite{AABLL13} to construct bi-regular cages of girth 5. In fact,  the suitable graphs to amalgamate can have some common Cayley color, but only for some specific edges.

\noindent The paper is organized as follows. In the next Section we work with elliptic semiplanes of type~{\emph{C}} and with techniques used in \cite{AABL12, AABLL13}, in Section 4 with elliptic semiplanes of type {\emph{L} and with techniques given by Funk in  \cite{F09}. Finally, in Section 5 we return to the elliptic semiplanes of type {\emph{C}} for even primes because they require new descriptions.

\section {Amalgamating  into elliptic semiplanes of type \emph{C}}

Let $q$ be a prime power and $\mathbb{F}_q$ the finite field of order $q$; we recall the definition and properties of the incidence bipartite graph $C_q$  of an elliptic semiplane of type \emph{C} exactly as they appear in \cite{AABL12, AABLL13}. Notice that in these papers the authors call this graph $B_q$ and here, as it is related to the elliptic semiplane of type \emph{C}, we prefer to call it $C_q$.

 \begin{definition}\label{defCq}
 Let $C_q$ be a bipartite graph with vertex set
$(V_0,V_1)$ where $V_r=\mathbb{F}_q\times \mathbb{F}_q$, $r=0,1$; and the
edge set defined as follows:
\begin{equation}\label{Bq}
(x,y)_0\in V_0 \mbox{ adjacent to } (m,b)_1 \in V_1  \mbox{ if and only if } y=mx+b.
\end{equation}
\end{definition}

\noindent The graph $C_q$ is also known  as the incidence graph of the biaffine
plane \cite{H04} and it has been used in the problem of finding extremal graphs without short cycles (see \cite{AABL12, AABLL13,  AFLN1, AFLN2, AFLN3, AB11, ABH10,AGMS07, B08, BMSZ13,LU95}).
The graph $C_q$ is $q$-regular of order $2q^2$, has girth   $g =  6$  for $q\ge 3$ and it is  vertex transitive. Other properties of the  graph $C_q$  are well known (see
\cite{AABL12, AABLL13, H04,LU95}).

\noindent  Let $\Gamma_1$ and $\Gamma_2$ be two graphs of the same order and with the same labels on their vertices; an \emph{amalgam of $\Gamma_1$ into $\Gamma_2$} is a graph obtained adding all the edges of $\Gamma_1$ into $\Gamma_2$.
 In \cite{AABL12} it is described a technique of amalgamation of two $r$-regular graphs $H_0, H_1$ and two $(r,r+1)$-regular graphs $G_0, G_1$ (all of them of girth at least $5$ and with some specific properties) into a subgraph of $C_q$  obtaining the resulting amalgam graph, denoted by
$\mathcal{C}_q(H_0,H_1,G_0,G_1)$, which is $(q+r)$-regular and has girth at least five.

\noindent Theorem \ref{teoC} is a reformulation of Theorem~$5$ in \cite{AABL12} (with a new strong hypothesis that also appears in Theorem~$4.9$ in \cite{AABLL13}).  Recall that if $G$ is a graph with $V(G)$ labeled with the elements of $\mathbb{F}_q$ and $\alpha\beta$ is an edge of $G$, then the \emph{Cayley Color} or \emph{weight} of $\alpha\beta$ is $\pm (\alpha-\beta)\in \mathbb{F}_q-\{0\}$.

\begin{theorem} \label{teoC}

Let $q\geq 3$ be a prime power and $r \ge 2 $  an integer. Consider graphs $H_0$,  $H_1$,  $G_0$ and $G_1$ with the following properties:
\begin{itemize}
\item[$(i) $]    $V(G_i) = \mathbb{F}_q$ and $G_i$ is an $( r,r+1)$-regular graph of girth $ g(G_i) \ge 5$ for $i=0,1$;

\item[$(ii)$] $H_i$ is an $r$-regular graph of girth $g(H_i) \ge 5$ and $V(H_i)= \{v \in \mathbb{F}_q  : d_{G_j}(v)=r \ \mbox{with}\  i\not= j \}$, for $i,j\in \{0,1\}$.

\item[ $(iii)$]   $E(H_0) \cap  E(H_1) = \emptyset$, $E(H_0) \cap  E(G_1) = \emptyset$,
$E(H_1) \cap  E(G_0) = \emptyset$  and   $G_0$ and $G_1$ have disjoint  Cayley colors.

\end{itemize}

\noindent  Consider  the sets of vertices $ P'_0=  \{  ( 0 ,y )_0 : y  \in V(H_0) \}$,  $L'_0=  \{  ( 0,b )_1  : b \in V(H_1) \}$; for all $x,m\in  \mathbb{F}_q-0$, let  $P_x=  \{  ( x ,y )_0:  y \in \mathbb{F}_q  \} $
and
$L_m=  \{  ( m,b )_1  : b \in \mathbb{F}_q  \}$.  Let $ A= P'_0 \cup ( \bigcup_{x \in \mathbb{F}_q-0}   P_x) \cup L'_0 \cup ( \bigcup_{m \in \mathbb{F}_q-  0  }L_m)  $     and consider the induced subgraph $\mathcal{C}_q[A]$ where $\mathcal{C}_q$ is the graph given in Definition \ref{Bq}.

Moreover, let the sets
of edges $ E_0(0)=  \{  ( 0 ,y )_0  ( 0, y' )_0 : yy ' \in E(H_0) \},$ $E_1(0)=  \{  ( 0,b )_1  ( 0,b' )_1 : bb'\in E(H_1) \},$
$ E_0(x)=  \{  ( x ,y )_0  ( x, y' )_0 : yy ' \in E(G_0) \},$
$E_1(m)=  \{  ( m,b )_1  ( m,b' )_1 : bb' \in E(G_1) \}$ for all $m, x  \in  \mathbb{F}_q - 0 .$

The graph $\mathcal{C}_q(H_0,H_1,G_0,G_1)$ with vertex set $A$ and edge set $E(C_q[A]) \cup ( \bigcup_{x \in \mathbb{F}_q}   E_0(x) )  \cup ( \bigcup_{m \in \mathbb{F}_q}
  E_1(m) )  $ is $(q+r)$-regular and has girth at least five.

\end{theorem}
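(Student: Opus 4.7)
The plan is to verify the two claims of the theorem in sequence: first that $\mathcal{C}_q(H_0,H_1,G_0,G_1)$ is $(q+r)$-regular, and then that it has no cycle of length $3$ or $4$.

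\textbf{Regularity.} I would analyze the four families of vertices separately. Take $(x,y)_0 \in P_x$ with $x\neq 0$. Its neighbors in $C_q$ are the $q$ vertices $(m,y-mx)_1$, of which the $q-1$ with $m\neq 0$ lie in $L_m \subseteq A$ automatically; the remaining candidate $(0,y)_1$ belongs to $A$ iff $y \in V(H_1)$, i.e.\ iff $d_{G_0}(y)=r$. The added edges of $E_0(x)$ contribute $d_{G_0}(y)$, which is $r$ or $r+1$ according to whether $y\in V(H_1)$ or not. In both subcases the two contributions add up to $q+r$. The case of $(m,b)_1 \in L_m$ with $m\neq 0$ is symmetric, using $G_1$ and $V(H_0)$. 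Finally, for $(0,y)_0 \in P'_0$ the bipartite degree in $C_q[A]$ is $q-1$ plus an indicator coming from $L'_0$, while $E_0(0)$ contributes $d_{H_0}(y)=r$; together with the corresponding count for $L'_0$ (using $H_1$), condition $(ii)$ yields total degree $q+r$. The case $L'_0$ is analogous.

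\textbf{Girth.} Since $C_q$ is bipartite with $g(C_q)=6$ for $q\ge 3$, and every edge added by the $E_i(\cdot)$ joins two vertices of the same block (hence on the same side of the bipartition), any short cycle must use some added edges. I would rule out triangles and $4$-cycles by case analysis on how many added edges they contain.

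For triangles: parity of bipartite edges forces their number to be $0$ or $2$. Zero bipartite edges places the triangle inside a single block, contradicting $g(H_i),g(G_i)\ge 5$. Two bipartite edges give two edges from a common vertex of one side to two distinct vertices of the other side, which are then joined by an added edge; tracing through the incidence $y=mx+b$ forces the two ``other side'' vertices to coincide, a contradiction.

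For $4$-cycles, the decisive case is two added edges that are non-adjacent in the cycle. Write such a cycle as $u\,v\,w\,x\,u$ with $uv,wx$ added and $vw,xu$ bipartite, and set $u=(a,y_u)_0$, $v=(a,y_v)_0$, $w=(m,b_w)_1$, $x=(m,b_x)_1$. The incidences $y_v=ma+b_w$ and $y_u=ma+b_x$ yield
\[
y_u - y_v \;=\; b_x - b_w,
\]
so the Cayley colors of $uv$ and $wx$ coincide up to sign. Splitting on whether $a$ and/or $m$ equal $0$, hypothesis $(iii)$ gives a contradiction in each subcase: disjointness of the Cayley colors of $G_0$ and $G_1$ when $a,m\neq 0$, and the three disjointness relations $E(H_0)\cap E(G_1)=\emptyset$, $E(H_1)\cap E(G_0)=\emptyset$, $E(H_0)\cap E(H_1)=\emptyset$ in the mixed and pure cases. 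The other $4$-cycle subcases (one added edge, two adjacent added edges, three or four added edges) collapse either by the parity argument used for triangles or by the girth hypothesis on $H_i,G_i$.

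The main obstacle is precisely the non-adjacent two-added-edge subcase: this is where all clauses of $(iii)$ are used simultaneously, and where the correspondence between a forbidden $4$-cycle and a shared Cayley color has to be read off cleanly from the defining incidence $y=mx+b$. Everything else is bookkeeping.
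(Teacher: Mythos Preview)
The paper gives no self-contained proof of this theorem; it simply says the argument is identical to that of Theorem~4.9 in \cite{AABLL13}. Your direct verification---degree count followed by a case analysis on short cycles---is precisely the standard route, so there is nothing substantively different to compare.

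Two points do need tightening.

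\emph{Regularity on $P'_0$.} Your count for $(0,y)_0\in P'_0$ correctly gives bipartite degree $(q-1)+[y\in V(H_1)]$ and added degree $d_{H_0}(y)=r$, but then you assert that ``condition~$(ii)$ yields total degree $q+r$''. Condition~$(ii)$ does not do this: for $y\in V(H_0)$ you need $y\in V(H_1)$, i.e.\ $V(H_0)\subseteq V(H_1)$, and by the symmetric count on $L'_0$ also the reverse inclusion. Nothing in the stated hypotheses forces $V(H_0)=V(H_1)$; it happens to hold in every construction the paper uses (the degree-$r$ vertices of $G_0$ and $G_1$ always coincide), and presumably the cited reference includes it explicitly, but as written the theorem needs this extra assumption and your sketch papers over the gap.

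\emph{The $4$-cycle with two non-adjacent added edges, cases $a=0$ or $m=0$.} From the incidences you derive $y_u-y_v=b_x-b_w$ and then invoke the edge-disjointness clauses $E(H_0)\cap E(G_1)=\emptyset$, etc. But equality of Cayley colors is not the same as equality of edges, and the clauses in~$(iii)$ concern edges. What actually closes these subcases is stronger: when $a=0$ (or $m=0$) the incidences $y_v=ma+b_w$, $y_u=ma+b_x$ give $y_v=b_w$ and $y_u=b_x$ outright, so $\{y_u,y_v\}=\{b_w,b_x\}$ and the two added edges are literally the same labelled pair. One line, but without it the appeal to edge-disjointness is not justified. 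The $a,m\neq 0$ subcase, where you use disjoint Cayley colors, is fine as written.
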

The proof is the same as the one of Theorem~$4.9$ in \cite{AABLL13}. Notice that Theorem \ref{teoC} can also be applied when $G_0$ and $G_1$ are regular graphs (then $H_0=G_0$ and $H_1=G_1$). In this case we denote the resulting graph by $\mathcal{C}_q(G_0,G_1).$

Next, for primes $q \in \{16, 17, 19 \}$, we construct graphs $H_0$, $H_1$, $G_0$, $G_1$, which verify  the conditions of Theorem  \ref{teoC}.

\noindent{\bf{Construction 1:}}
\begin{itemize}
\item For $q=16$:

Let $(\mathbb{F}_{16},+) \cong  ( (\mathbb{Z}_2)^4,+)$ be a finite field of order $16$ with set of elements $\{  (d,e,f,g ) : d,e,f,g \in \mathbb{Z}_2\}$, we write $defg$ instead of $ (d,e,f,g )$.
Consider the graphs $H_0$, $H_1$, $G_0$ and $G_1$ displayed in Figure~\ref{fig:caseq16}.
The graphs $G_0$ and $G_1$ are not isomorphic, although both have girth 5 and order 16, with  6 vertices of degree $4$ and 10 vertices of degree $3$.
The labeling of the vertices of $G_0$ and $G_1$ is such that the vertices of the set $S= \{ 0 0 0 0, 1 1 0 0, 0 1 1 0, 1 0 0 1, 0 0 1 1,
1 1 1 1\}$  have degree four and  the other ones have degree three. The weights or Cayley colors of $G_0$ (and $G_1$) are $\{0001, 0 0 1 0, 0 1 0 0, 1 0 0 0, 1 1 1 1\}$ (and $ \{ 0 0 1 1, 0 1 1 0, 0 1 1 1, 1 0 0 1, 1 0 1 0,
1 0 1 1, 1 1 0 0, 1 1 0 1, 1 1 1 0  \}$, respectively).   Hence, $G_0$ and $G_1$ have disjoint sets of Cayley colors.
Moreover, the graphs $H_0$ and $H_1$ are isomorphic to the Petersen graph and they are labeled with the elements of $(\mathbb{Z}_2)^4- S$ in such a way that   $E(H_0) \cap  E(H_1) = \emptyset$, $E(H_0) \cap  E(G_1) = \emptyset$ and
$E(H_1) \cap  E(G_0) = \emptyset$.



\begin{figure}[h!]
\begin{center}
\scalebox{1.05}
{\begin{pspicture}(1.9, -7.7 )(3., 2.60)

\definecolor{color0}{rgb}{0.0,0.,0.}

\rput(  0.,  2.4  ){\tiny $H_0$}
\rput( 0,   -2.5   ){\tiny $G_0$}
\rput(  5.5,  2.4  ){\tiny $H_1$}
\rput( 5.5,   -2.5   ){\tiny $G_1$}

\pscircle[linewidth=0.013,dimen=outer, fillstyle=solid,fillcolor=color0](0., 2.   ){0.1}
\pscircle[linewidth=0.013,dimen=outer, fillstyle=solid,fillcolor=color0](-1.9, 0.62)  {0.1}
\pscircle[linewidth=0.013,dimen=outer, fillstyle=solid,fillcolor=color0](-1.2, -1.6)  {0.1}
\pscircle[linewidth=0.013,dimen=outer, fillstyle=solid,fillcolor=color0]( 1.2, -1.6 )  {0.1}
\pscircle[linewidth=0.013,dimen=outer, fillstyle=solid,fillcolor=color0]( 1.9, 0.62 )   {0.1}

\pscircle[linewidth=0.013,dimen=outer, fillstyle=solid,fillcolor=color0]( 0., 1.1 ){0.1}
\pscircle[linewidth=0.013,dimen=outer, fillstyle=solid,fillcolor=color0]( -1.04, 0.34 ){0.1}
\pscircle[linewidth=0.013,dimen=outer, fillstyle=solid,fillcolor=color0]( -0.65, -0.89 ){0.1}
\pscircle[linewidth=0.013,dimen=outer, fillstyle=solid,fillcolor=color0]( 0.65, -0.89 ){0.1}
\pscircle[linewidth=0.013,dimen=outer, fillstyle=solid,fillcolor=color0 ]( 1.05, 0.34 ){0.1}


\psline[linewidth=0.013cm](0., 2.  )  (  -1.9, 0.62  )
\psline[linewidth=0.013cm]( -1.9, 0.62 )  (-1.2, -1.6)
\psline[linewidth=0.013cm]( -1.2, -1.6 )  (1.2, -1.6  )
\psline[linewidth=0.013cm](1.2, -1.6 )  ( 1.9, 0.62  )
\psline[linewidth=0.013cm]( 1.9, 0.62 )  (0., 2.   )


\psline[linewidth=0.013cm](0., 1.1 )  (-0.65, -0.89 )
\psline[linewidth=0.013cm]( -0.65, -0.89 )  ( 1.05, 0.34 )
\psline[linewidth=0.013cm]( 1.05, 0.34 )  ( -1.04, 0.34  )
\psline[linewidth=0.013cm](  -1.04, 0.34 )  (0.65, -0.89  )
\psline[linewidth=0.013cm]( 0.65, -0.89 )  (0., 1.1)


\psline[linewidth=0.013cm]( 0., 2. )( 0., 1.1 )
\psline[linewidth=0.013cm]( -1.9, 0.62 ) ( -1.04, 0.34 )
\psline[linewidth=0.013cm]( -1.2, -1.6  )( -0.65, -0.89 )
\psline[linewidth=0.013cm](1.2, -1.6 )( 0.65, -0.89 )
\psline[linewidth=0.013cm]( 1.9, 0.62 )(1.05, 0.34 )

\rput( -0.4, 2.0 ){\tiny $0100$}
\rput(-2, 0.79 ){\tiny $0111$}
\rput( -1.2, -1.79 ){\tiny $1110$}
\rput(  1.2, -1.79  ){\tiny $1010$}
\rput( 2.05, 0.79  ){\tiny $1011$}

\rput(  -0.4, 1.2  ){\tiny $0101$}
\rput(  -1.0, 0.55  ){\tiny $0010$}
\rput(  -0.9, -0.7  ){\tiny $1101$}
\rput(  0.9, -0.7  ){\tiny $0001$}
\rput( 1.0, 0.55  ){\tiny $1000$}


\pscircle[linewidth=0.013,dimen=outer, fillstyle=solid,fillcolor=color0](5.5 , 2.   ){0.1}
\pscircle[linewidth=0.013,dimen=outer, fillstyle=solid,fillcolor=color0]( 3.6  , 0.62)  {0.1}
\pscircle[linewidth=0.013,dimen=outer, fillstyle=solid,fillcolor=color0](4.3, -1.6)  {0.1}
\pscircle[linewidth=0.013,dimen=outer, fillstyle=solid,fillcolor=color0]( 6.7, -1.6 )  {0.1}
\pscircle[linewidth=0.013,dimen=outer, fillstyle=solid,fillcolor=color0]( 7.4, 0.62 )   {0.1}

\pscircle[linewidth=0.013,dimen=outer, fillstyle=solid,fillcolor=color0]( 5.5, 1.1 ){0.1}
\pscircle[linewidth=0.013,dimen=outer, fillstyle=solid,fillcolor=color0]( 4.46, 0.34 ){0.1}
\pscircle[linewidth=0.013,dimen=outer, fillstyle=solid,fillcolor=color0]( 4.85, -0.89 ){0.1}
\pscircle[linewidth=0.013,dimen=outer, fillstyle=solid,fillcolor=color0](  6.15 , -0.89 ){0.1}
\pscircle[linewidth=0.013,dimen=outer, fillstyle=solid,fillcolor=color0 ]( 6.55 , 0.34 ){0.1}


\psline[linewidth=0.013cm](5.5 , 2.    )  (  3.6  , 0.62  )
\psline[linewidth=0.013cm]( 3.6  , 0.62 )  (4.3, -1.6)
\psline[linewidth=0.013cm]( 4.3, -1.6 )  (6.7, -1.6  )
\psline[linewidth=0.013cm](6.7, -1.6 )  ( 7.4, 0.62  )
\psline[linewidth=0.013cm]( 7.4, 0.62 )  ( 5.5,2. )


\psline[linewidth=0.013cm]( 5.5, 1.1  )  (4.85, -0.89 )
\psline[linewidth=0.013cm]( 4.85, -0.89 )  ( 6.55 , 0.34 )
\psline[linewidth=0.013cm]( 6.55 , 0.34 )  ( 4.46, 0.34  )
\psline[linewidth=0.013cm]( 4.46, 0.34 )  (6.15 , -0.89 )
\psline[linewidth=0.013cm]( 6.15 , -0.89 )  ( 5.5, 1.1 )


\psline[linewidth=0.013cm]( 5.5 , 2.  )( 5.5, 1.1 )
\psline[linewidth=0.013cm](3.6  , 0.62 ) ( 4.46, 0.34 )
\psline[linewidth=0.013cm]( 4.3, -1.6  )( 4.85, -0.89 )
\psline[linewidth=0.013cm](6.7, -1.6 )(6.15 , -0.89 )
\psline[linewidth=0.013cm]( 7.4, 0.62 )( 6.55 , 0.34 )

\rput( 5.1, 2.0 ){\tiny $1000$}
\rput(  3.5 , 0.79 ){\tiny $1010$}
\rput( 4.3, -1.79 ){\tiny $0101$}
\rput(  6.7 , -1.79  ){\tiny $0111$}
\rput( 7.55, 0.79  ){\tiny $0001$}

\rput(   5.1   , 1.2  ){\tiny $1110$}
\rput(  4.5 , 0.55  ){\tiny $1101$}
\rput(  4.6, -0.7  ){\tiny $0010$}
\rput(  6.4, -0.7  ){\tiny $1011$}
\rput( 6.5, 0.55  ){\tiny $0100$}


\pscircle[linewidth=0.013,dimen=outer, fillstyle=solid,fillcolor=color0](  0, -3.0  ){0.1}                     
\pscircle[linewidth=0.013,dimen=outer, fillstyle=solid,fillcolor=color0](  -0.765, -3.15 ){0.1}            
\pscircle[linewidth=0.013,dimen=outer, fillstyle=solid,fillcolor=color0](  -1.41, -3.59  ){0.1}
\pscircle[linewidth=0.013,dimen=outer, fillstyle=solid,fillcolor=color0](  -1.85, -4.23   ){0.1}
\pscircle[linewidth=0.013,dimen=outer, fillstyle=solid,fillcolor=color0]( -2.00, -5.00 ){0.1}

\pscircle[linewidth=0.013,dimen=outer, fillstyle=solid,fillcolor=color0]( -1.85, -5.77  ){0.1}                
\pscircle[linewidth=0.013,dimen=outer, fillstyle=solid,fillcolor=color0](  -1.41, -6.41  ){0.1}
\pscircle[linewidth=0.013,dimen=outer, fillstyle=solid,fillcolor=color0](  -0.765, -6.85   ){0.1}
\pscircle[linewidth=0.013,dimen=outer, fillstyle=solid,fillcolor=color0]( 0, -7.00  ){0.1}
\pscircle[linewidth=0.013,dimen=outer, fillstyle=solid,fillcolor=color0](  0.765, -6.85 ){0.1}

\pscircle[linewidth=0.013,dimen=outer, fillstyle=solid,fillcolor=color0](1.41, -6.41  ){0.1}       
\pscircle[linewidth=0.013,dimen=outer, fillstyle=solid,fillcolor=color0]( 1.85, -5.77  ){0.1}
\pscircle[linewidth=0.013,dimen=outer, fillstyle=solid,fillcolor=color0](  2.00, -5.00   ){0.1}
\pscircle[linewidth=0.013,dimen=outer, fillstyle=solid,fillcolor=color0]( 1.85, -4.23  ){0.1}
\pscircle[linewidth=0.013,dimen=outer, fillstyle=solid,fillcolor=color0]( 1.41, -3.59   ){0.1}

\pscircle[linewidth=0.013,dimen=outer, fillstyle=solid,fillcolor=color0]( 0.765, -3.15 ){0.1}       

\rput(  0, -2.8 ){\tiny $1010$}

\rput(  -0.7, -2.94 ){\tiny $1110$}
\rput(-1.48, -3.4  ){\tiny $1100$}
\rput( -1.99, -3.97   ){\tiny $1101$}
\rput(  -1.67, -4.83 ){\tiny $1111$}
\rput(  -2.1, -5.97 ){\tiny $0111$}
\rput(  -1.65, -6.60 ){\tiny $0110$}

\rput( -0.84, -7.1  ){\tiny $0100$}
\rput(  0.0, -7.24   ){\tiny $0101$}
\rput(  0.84, -7.1   ){\tiny $0001$}

\rput(  1.65, -6.60  ){\tiny $0011$}
\rput( 2.1, -5.97 ){\tiny $0010$}
\rput(  1.67, -4.83 ){\tiny $0000$}
\rput(  1.99, -3.97  ) {\tiny $1000$}
\rput(  1.59, -3.4   ){\tiny $1001$}
\rput(  0.77, -2.94  ){\tiny $1011$}


\psline[linewidth=0.013cm](0, -3.0  )  (-0.765, -3.15  )
\psline[linewidth=0.013cm]( -0.765, -3.15 )  ( -1.41, -3.59 )
\psline[linewidth=0.013cm]( -1.41, -3.59 )  ( -1.85, -4.23 )
\psline[linewidth=0.013cm]( -1.85, -4.23 )  ( -2.00, -5.00  )
\psline[linewidth=0.013cm](  -2.00, -5.00 )  ( -1.85, -5.77  )

\psline[linewidth=0.013cm]( -1.85, -5.77 )  (  -1.41, -6.41 )
\psline[linewidth=0.013cm](  -1.41, -6.41 )  (-0.765, -6.85 )
\psline[linewidth=0.013cm]( -0.765, -6.85 )  ( 0, -7.00 )
\psline[linewidth=0.013cm]( 0, -7.00 )  (  0.765, -6.85  )
\psline[linewidth=0.013cm](  0.765, -6.85 )  ( 1.41, -6.41 )

\psline[linewidth=0.013cm](1.41, -6.41 )  ( 1.85, -5.77 )
\psline[linewidth=0.013cm]( 1.85, -5.77 )  (  2.00, -5.00  )
\psline[linewidth=0.013cm](  2.00, -5.00  )  ( 1.85, -4.23 )
\psline[linewidth=0.013cm]( 1.85, -4.23)  ( 1.41, -3.59  )
\psline[linewidth=0.013cm](1.41, -3.59 )  (  1.41, -3.59 )

\psline[linewidth=0.013cm](  1.41, -3.59 )  (0.765, -3.15  )
\psline[linewidth=0.013cm]( 0.765, -3.15 )  ( 0, -3.0  )


\psline[linewidth=0.013cm]( 0, -3.0 )  ( 1.85, -5.77 )
\psline[linewidth=0.013cm]( -0.765, -3.15 )  ( -1.41, -6.41 )
\psline[linewidth=0.013cm]( -1.41, -3.59 )  (1.85, -4.23 )
\psline[linewidth=0.013cm]( -1.41, -3.59 )  (1.41, -6.41 )

\psline[linewidth=0.013cm]( -1.85, -4.23 )  ( 0, -7.00 )

\psline[linewidth=0.013cm]( -2.00, -5.00 )  ( 0.765, -3.15 )
\psline[linewidth=0.013cm](  -2.00, -5.00 )  ( 2.00, -5.00 )
\psline[linewidth=0.013cm](-1.85, -5.77 )  ( 1.41, -6.41 )

\psline[linewidth=0.013cm]( -1.41, -6.41 )  (1.41, -3.59 )
\psline[linewidth=0.013cm]( -0.765, -6.85)  (2.00, -5.00 )
\psline[linewidth=0.013cm]( 0.765, -6.85 )  (1.41, -3.59 )


\pscircle[linewidth=0.013,dimen=outer, fillstyle=solid,fillcolor=color0](  5.5, -3.0  ){0.1}                     
\pscircle[linewidth=0.013,dimen=outer, fillstyle=solid,fillcolor=color0](  4.735, -3.15 ){0.1}            
\pscircle[linewidth=0.013,dimen=outer, fillstyle=solid,fillcolor=color0](   4.09 , -3.59  ){0.1}
\pscircle[linewidth=0.013,dimen=outer, fillstyle=solid,fillcolor=color0](  3.65 , -4.23   ){0.1}
\pscircle[linewidth=0.013,dimen=outer, fillstyle=solid,fillcolor=color0]( 3.5, -5.00 ){0.1}

\pscircle[linewidth=0.013,dimen=outer, fillstyle=solid,fillcolor=color0](   3.65  , -5.77  ){0.1}                
\pscircle[linewidth=0.013,dimen=outer, fillstyle=solid,fillcolor=color0](  4.09  , -6.41  ){0.1}
\pscircle[linewidth=0.013,dimen=outer, fillstyle=solid,fillcolor=color0](   4.735 , -6.85   ){0.1}
\pscircle[linewidth=0.013,dimen=outer, fillstyle=solid,fillcolor=color0]( 5.5 , -7.00  ){0.1}
\pscircle[linewidth=0.013,dimen=outer, fillstyle=solid,fillcolor=color0](  6.265  , -6.85 ){0.1}

\pscircle[linewidth=0.013,dimen=outer, fillstyle=solid,fillcolor=color0](  6.91  , -6.41  ){0.1}       
\pscircle[linewidth=0.013,dimen=outer, fillstyle=solid,fillcolor=color0](    7.35  , -5.77  ){0.1}
\pscircle[linewidth=0.013,dimen=outer, fillstyle=solid,fillcolor=color0](   7.5  , -5.00   ){0.1}
\pscircle[linewidth=0.013,dimen=outer, fillstyle=solid,fillcolor=color0](   7.35 , -4.23  ){0.1}
\pscircle[linewidth=0.013,dimen=outer, fillstyle=solid,fillcolor=color0](  6.91 , -3.59   ){0.1}

\pscircle[linewidth=0.013,dimen=outer, fillstyle=solid,fillcolor=color0]( 6.265 , -3.15 ){0.1}       

\rput(  5.5, -2.8 ){\tiny $1101$}

\rput( 4.8, -2.94 ){\tiny $0000$}
\rput(  4.02 , -3.4  ){\tiny $0111$}
\rput(  3.51  , -3.97   ){\tiny $1100$}
\rput(   3.83 , -4.83 ){\tiny $0001$}
\rput(   3.4  , -5.97 ){\tiny $1011$}
\rput(   3.85  , -6.60 ){\tiny $0010$}

\rput(  4.66  , -7.1  ){\tiny $1001$}
\rput(  5.5, -7.24   ){\tiny $1010$}
\rput(  6.34  , -7.1   ){\tiny $0110$}

\rput(  7.15 , -6.60  ){\tiny $1000$}
\rput(   7.6 , -5.97 ){\tiny $1111$}
\rput(  7.17  , -4.83 ){\tiny $0100$}
\rput(   7.49, -3.97  ) {\tiny $1110$}
\rput(   7.09 , -3.4   ){\tiny $0101$}
\rput(  6.27 , -2.94  ){\tiny $0011$}


\psline[linewidth=0.013cm](5.5, -3.0  )  ( 4.735 , -3.15  )
\psline[linewidth=0.013cm]( 4.735 , -3.15 )  ( 4.09, -3.59 )
\psline[linewidth=0.013cm]( 4.09 , -3.59 )  (  3.65, -4.23 )
\psline[linewidth=0.013cm]( 3.65  , -4.23 )  (  3.5, -5.00  )
\psline[linewidth=0.013cm](  3.5 , -5.00 )  (  3.65 , -5.77  )

\psline[linewidth=0.013cm](    3.65  , -5.77 )  (   4.09  , -6.41 )
\psline[linewidth=0.013cm](  4.09  , -6.41 )  (  4.735  , -6.85 )
\psline[linewidth=0.013cm]( 4.735, -6.85 )  ( 5.5, -7.00 )
\psline[linewidth=0.013cm]( 5.5, -7.00 )  (  6.265, -6.85  )
\psline[linewidth=0.013cm](  6.265, -6.85 )  (  6.91  , -6.41 )

\psline[linewidth=0.013cm](   6.91  , -6.41 )  (  7.35 , -5.77 )
\psline[linewidth=0.013cm]( 7.35 , -5.77 )  ( 7.50, -5.00  )
\psline[linewidth=0.013cm](  7.50, -5.00  )  (  7.35  , -4.23 )
\psline[linewidth=0.013cm](  7.35  , -4.23)  ( 6.91 , -3.59  )
\psline[linewidth=0.013cm]( 6.91, -3.59 )  (  6.91  , -3.59 )

\psline[linewidth=0.013cm](  6.91  , -3.59 )  (  6.265, -3.15  )
\psline[linewidth=0.013cm]( 6.265, -3.15 )  ( 5.5, -3.0  )


\psline[linewidth=0.013cm]( 5.5, -3.00 )  (3.65224, -5.77 )
\psline[linewidth=0.013cm]( 6.26537, -3.15 )  ( 4.73463, -6.85 )
\psline[linewidth=0.013cm]( 6.26537, -3.15 )  (6.91421, -6.41 )
\psline[linewidth=0.013cm]( 6.91421, -3.59 )  (3.65224, -4.23 )

\psline[linewidth=0.013cm]( 4.73463, -3.15 )  ( 7.34776, -4.23 )

\psline[linewidth=0.013cm]( 4.08579, -3.59 )  ( 4.73463, -6.85 )
\psline[linewidth=0.013cm]( 3.65224, -4.23  )  (7.34776, -5.77 )
\psline[linewidth=0.013cm](3.5, -5.00  )  ( 6.26537, -6.85 )

\psline[linewidth=0.013cm]( 4.08579, -6.41  )  ( 7.34776, -5.77 )
\psline[linewidth=0.013cm](5.5, -7.00)  (7.5, -5.00 )
\psline[linewidth=0.013cm]( 4.73463, -3.15 )  (6.26537, -6.85 )

\end{pspicture}}
\vskip -0.2cm
\caption{\small   The graphs $H_i$ and $G_i$ for $i=0,1$ where $q=16$.
\label{fig:caseq16}}
\end{center}
\end{figure}
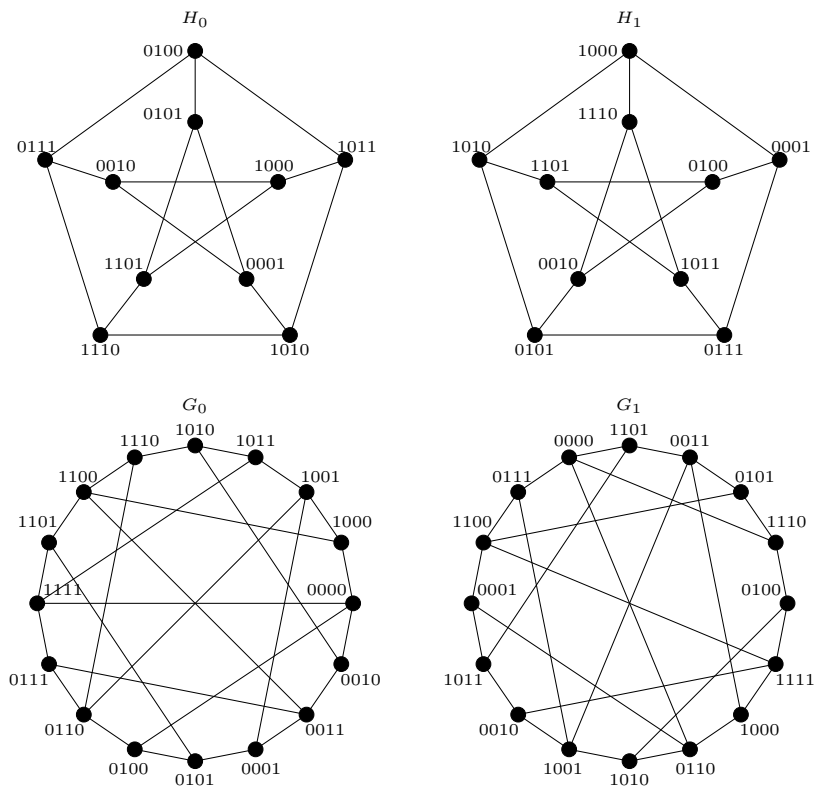

\item For $q=17$:

Let $\mathbb{Z}_{17}$ be a finite field of order $17$ and consider the graphs $H_0$, $H_1$, $G_0$ and $G_1$ displayed in Figure~\ref{fig:caseq17}.
The graphs  $G_0$ and $G_1$ are isomorphic, with  7 vertices of degree $4$ and 10 vertices of degree $3$.
Both graphs have the same set of vertices $S= \{0,2,5,8,10,13,15 \}$ of degree $4$, and the Cayley colors of $G_0$ (and $G_1$) are $ \pm \{ 1,5,8 \}$ (and $ \pm \{2,3,4,6,7\}$, respectively).
Regarding $H_0$ and $H_1$,  they are labeled with the elements of $\mathbb{Z}_{17}- S$ and verify   $E(H_0) \cap  E(H_1) = \emptyset$, $E(H_0) \cap  E(G_1) = \emptyset$ and
$E(H_1) \cap  E(G_0) = \emptyset$.




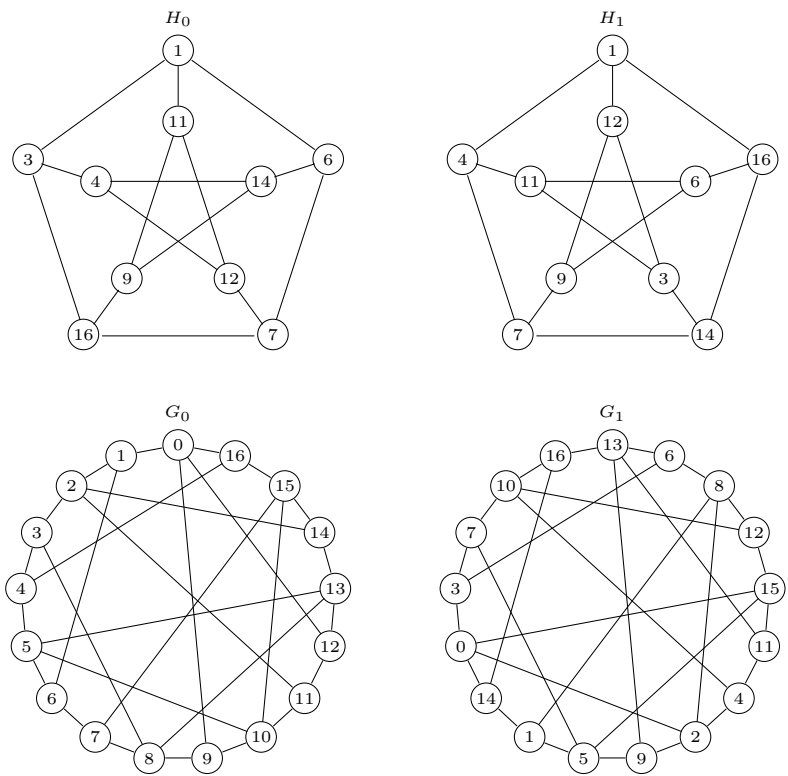
\begin{figure}[h!]
\begin{center}
\scalebox{1.05}
{\begin{pspicture}(1.9, -7.7 )(3., 2.60)

\definecolor{color0}{rgb}{0.0,0.,0.}

\rput(  0.,  2.4  ){\tiny$H_0$}
\rput( 0,   -2.6   ){\tiny $G_0$}
\rput(  5.5,  2.4  ){\tiny$H_1$}
\rput( 5.5,   -2.6   ){\tiny $G_1$}

\pscircle[linewidth=0.013,dimen=outer](0., 2.){0.2}
\pscircle[linewidth=0.013,dimen=outer](-1.9, 0.62){0.2}
\pscircle[linewidth=0.013,dimen=outer](-1.2, -1.6){0.2}
\pscircle[linewidth=0.013,dimen=outer]( 1.2, -1.6 ){0.2}
\pscircle[linewidth=0.013,dimen=outer]( 1.9, 0.62 ){0.2}

\pscircle[linewidth=0.013,dimen=outer]( 0., 1.1 ){0.2}
\pscircle[linewidth=0.013,dimen=outer]( -1.04, 0.34 ){0.2}
\pscircle[linewidth=0.013,dimen=outer]( -0.65, -0.89 ){0.2}
\pscircle[linewidth=0.013,dimen=outer]( 0.65, -0.89 ){0.2}
\pscircle[linewidth=0.013, dimen=outer]( 1.05, 0.34 ){0.2}


\psline[linewidth=0.013cm]( -0.17119, 1.87562 )  (  -1.73092, 0.742411  )
\psline[linewidth=0.013cm]( -1.83672, 0.416788 )  (-1.24096, -1.41679)
\psline[linewidth=0.013cm]( -0.963968, -1.61803 )  (0.963968, -1.61803  )
\psline[linewidth=0.013cm]( 1.24096, -1.41679 )  ( 1.83672, 0.416788  )
\psline[linewidth=0.013cm]( 0.17119, 1.87562 )  ( 1.73092, 0.742411  )


\psline[linewidth=0.013cm](-0.0581907, 0.920907 )  ( -0.588373, -0.710826 )
\psline[linewidth=0.013cm]( -0.893817, 0.229233 )  ( 0.494218, -0.779233 )
\psline[linewidth=0.013cm]( -0.857853, 0.339919 )  ( 0.857853, 0.339919  )
\psline[linewidth=0.013cm](  -0.494218, -0.779233 )  ( 0.893817, 0.229233  )
\psline[linewidth=0.013cm]( 0.0581907, 0.920907 )  (0.588373, -0.710826)


\psline[linewidth=0.013cm]( 0,  1.8 )( 0,1.3 )
\psline[linewidth=0.013cm]( -1.73092, 0.562411 ) ( -1.21735, 0.395542 )
\psline[linewidth=0.013cm]( 1.73092, 0.562411  )( 1.21735, 0.395542 )
\psline[linewidth=0.013cm]( 1.06977, -1.47241 )( 0.752365, -1.03554 )
\psline[linewidth=0.013cm]( -1.06977, -1.47241 )( -0.752365, -1.03554 )

\rput( 0, 2.0 ){\tiny $1$}
\rput(-1.9, 0.62 ){\tiny $3$}
\rput( -1.2, -1.6 ){\tiny $16$}
\rput(  1.2, -1.6  ){\tiny $7$}
\rput( 1.9, 0.62  ){\tiny $6$}

\rput(  0., 1.1  ){\tiny $11$}
\rput(  -1.04616, 0.339919  ){\tiny $4$}
\rput(  -0.646564, -0.889919  ){\tiny $9$}
\rput(  0.646564, -0.889919  ){\tiny $12$}
\rput( 1.04616, 0.339919  ){\tiny $14$}

\pscircle[linewidth=0.013,dimen=outer](  5.5, 2.  ){0.2}
\pscircle[linewidth=0.013,dimen=outer]( 3.6, 0.62){0.2}
\pscircle[linewidth=0.013,dimen=outer]( 4.3, -1.6){0.2}
\pscircle[linewidth=0.013,dimen=outer]( 6.7, -1.6 ){0.2}
\pscircle[linewidth=0.013,dimen=outer]( 7.4, 0.62 ){0.2}

\pscircle[linewidth=0.013,dimen=outer]( 5.5, 1.1 ){0.2}
\pscircle[linewidth=0.013,dimen=outer]( 4.46, 0.34 ){0.2}
\pscircle[linewidth=0.013,dimen=outer]( 4.85, -0.89 ){0.2}
\pscircle[linewidth=0.013,dimen=outer]( 6.15, -0.89 ){0.2}
\pscircle[linewidth=0.013, dimen=outer]( 6.55, 0.34 ){0.2}


\psline[linewidth=0.013cm]( 5.32881, 1.87562 )  ( 3.76908, 0.742411  )
\psline[linewidth=0.013cm]( 3.66328, 0.416788 )  (4.25904, -1.41679)
\psline[linewidth=0.013cm]( 4.53603, -1.61803 )  (6.46397, -1.61803  )
\psline[linewidth=0.013cm]( 6.74096, -1.41679 )  ( 7.33672, 0.416788  )
\psline[linewidth=0.013cm]( 5.67119, 1.87562 )  ( 7.23, 0.742411   )


\psline[linewidth=0.013cm]( 5.44181, 0.920907 )  (  4.91163 , -0.710826 )
\psline[linewidth=0.013cm](  4.6  , 0.229233 )  ( 5.99422, -0.779233 )
\psline[linewidth=0.013cm](  4.64215  , 0.339919 )  ( 6.35785 , 0.339919  )
\psline[linewidth=0.013cm](   5.00578  , -0.779233 )  ( 6.39382 , 0.229233  )
\psline[linewidth=0.013cm]( 5.5581907, 0.920907 )  (6.08837  , -0.710826)


\psline[linewidth=0.013cm]( 5.5,  1.8 )( 5.5, 1.3 )
\psline[linewidth=0.013cm]( 3.76908, 0.562411 ) (4.28265,  0.395542 )
\psline[linewidth=0.013cm]( 7.2309, 0.562411  )( 6.71735, 0.395542 )
\psline[linewidth=0.013cm]( 6.56977, -1.47241 )( 6.25237, -1.03554 )
\psline[linewidth=0.013cm]( 4.43023, -1.47241 )( 4.74764, -1.03554 )

\rput( 5.5, 2.0 ){\tiny $1$}
\rput(3.6, 0.62 ){\tiny $4$}
\rput( 4.3, -1.6 ){\tiny $7$}
\rput(  6.67, -1.6  ){\tiny $14$}
\rput( 7.38, 0.62  ){\tiny $16$}

\rput(  5.5, 1.1  ){\tiny $12$}
\rput(  4.45384 , 0.339919  ){\tiny $11$}
\rput(  4.85344 , -0.889919  ){\tiny $9$}
\rput(  6.14656, -0.889919  ){\tiny $3$}
\rput( 6.54616, 0.339919  ){\tiny $6$}


\pscircle[linewidth=0.013,dimen=outer](  0, -3.0  ){0.2}                     
\pscircle[linewidth=0.013,dimen=outer](  -0.722, -3.14 ){0.2}            
\pscircle[linewidth=0.013,dimen=outer](  -1.35, -3.52  ){0.2}
\pscircle[linewidth=0.013,dimen=outer](  -1.79, -4.11   ){0.2}
\pscircle[linewidth=0.013,dimen=outer]( -1.99, -4.82 ){0.2}

\pscircle[linewidth=0.013,dimen=outer]( -1.92, -5.55  ){0.2}                
\pscircle[linewidth=0.013,dimen=outer](  -1.60, -6.21   ){0.2}
\pscircle[linewidth=0.013,dimen=outer](  -1.05, -6.70   ){0.2}
\pscircle[linewidth=0.013,dimen=outer]( -0.367, -6.97  ){0.2}
\pscircle[linewidth=0.013, dimen=outer](  0.367, -6.97  ){0.2}

\pscircle[linewidth=0.013,dimen=outer]( 1.05, -6.70  ){0.2}       
\pscircle[linewidth=0.013,dimen=outer]( 1.60, -6.21  ){0.2}
\pscircle[linewidth=0.013,dimen=outer](  1.92, -5.55   ){0.2}
\pscircle[linewidth=0.013,dimen=outer]( 1.99, -4.82  ){0.2}
\pscircle[linewidth=0.013, dimen=outer]( 1.79, -4.11   ){0.2}

\pscircle[linewidth=0.013,dimen=outer]( 1.35, -3.52 ){0.2}       
\pscircle[linewidth=0.013,dimen=outer]( 0.722, -3.14  ){0.2}

\rput(  0, -3.0 ){\tiny $0$}
\rput(  -0.722, -3.14 ){\tiny $1$}
\rput( -1.35, -3.52  ){\tiny $2$}
\rput( -1.79, -4.11  ){\tiny $3$}
\rput( -1.99, -4.82   ){\tiny $4$}

\rput(  -1.92, -5.55 ){\tiny $5$}
\rput(  -1.60, -6.21 ){\tiny $6$}
\rput(  -1.05, -6.70 ){\tiny $7$}
\rput( -0.367, -6.97  ){\tiny $8$}
\rput(  0.367, -6.97   ){\tiny $9$}

\rput(  1.05, -6.70  ){\tiny $10$}
\rput(  1.60, -6.21 ){\tiny $11$}
\rput(  1.92, -5.55 ){\tiny $12$}
\rput(  1.99, -4.82  ){\tiny $13$}
\rput(  1.79, -4.11   ){\tiny $14$}

\rput(  1.35, -3.52  ){\tiny $15$}
\rput( 0.722, -3.14   ){\tiny $16$}


\psline[linewidth=0.013cm]( -0.202295, -3.03782 )  ( -0.520188, -3.09724  )
\psline[linewidth=0.013cm]( -0.897458, -3.24339 )  ( -1.17242, -3.41364 )
\psline[linewidth=0.013cm]( -1.47141, -3.68621 )  ( -1.6663, -3.94429  )
\psline[linewidth=0.013cm](  -1.84665, -4.30647 )  ( -1.93515, -4.61752  )
\psline[linewidth=0.013cm](  -1.97248, -5.02038 )  (  -1.94264, -5.3424   )

\psline[linewidth=0.013cm]( -1.83192, -5.73155 )  (  -1.68777, -6.02105 )
\psline[linewidth=0.013cm](  -1.44395, -6.34392 )  ( -1.20495, -6.56179 )
\psline[linewidth=0.013cm]( -0.860962, -6.77478 )  ( -0.559401, -6.8916 )
\psline[linewidth=0.013cm](  -0.1617, -6.96595 )  (  0.1617, -6.96595  )
\psline[linewidth=0.013cm](  0.559401, -6.8916 )  ( 0.860962, -6.77478 )

\psline[linewidth=0.013cm](1.20495, -6.56179 )  (  1.44395, -6.34392 )
\psline[linewidth=0.013cm]( 1.68777, -6.02105 )  ( 1.83192, -5.73155 )
\psline[linewidth=0.013cm]( 1.94264, -5.3424 )  ( 1.97248, -5.02038 )
\psline[linewidth=0.013cm](  1.93515, -4.61752 )  ( 1.84665, -4.30647  )
\psline[linewidth=0.013cm]( 1.6663, -3.94429 )  ( 1.47141, -3.68621 )

\psline[linewidth=0.013cm](  1.17242, -3.41364 )  ( 0.897458, -3.24339  )
\psline[linewidth=0.013cm]( 0.520, -3.09724 )  ( 0.202295, -3.03782 )


\psline[linewidth=0.013cm](  0.115419, -3.15284 )  ( 1.80823, -5.39449 )
\psline[linewidth=0.013cm]( -1.74506, -5.61651 )  ( 0.874273, -6.63125 )
\psline[linewidth=0.013cm]( 1.32972, -3.71269 )  ( 1.07054, -6.50973 )
\psline[linewidth=0.013cm]( -1.70496, -4.27997 )  ( -0.452869, -6.7945 )
\psline[linewidth=0.013cm]( 1.84993, -4.944 )  ( -0.225961, -6.83692 )
\psline[linewidth=0.013cm]( -0.774896, -3.31927 )  ( -1.54362, -6.02106 )
\psline[linewidth=0.013cm]( -1.82863, -4.71464 )  ( 0.559646, -3.23588 )
\psline[linewidth=0.013cm]( -1.15913, -3.55717 )  ( 1.60206, -4.07333 )

\psline[linewidth=0.013cm]( -1.20022, -3.65615 )  (1.44886, -6.0711 )
\psline[linewidth=0.013cm]( -1.7279, -5.51073)  (1.79571, -4.852 )
\psline[linewidth=0.013cm]( 0.018375, -3.1983 )  (0.349124, -6.76765 )
\psline[linewidth=0.013cm]( -0.932852, -6.54151 )  (1.22738, -3.68 )


\pscircle[linewidth=0.013,dimen=outer](  5.5, -3.0  ){0.2}                     
\pscircle[linewidth=0.013,dimen=outer](  4.778, -3.14 ){0.2}            
\pscircle[linewidth=0.013,dimen=outer](  4.15, -3.52  ){0.2}
\pscircle[linewidth=0.013,dimen=outer](  3.71, -4.11   ){0.2}
\pscircle[linewidth=0.013,dimen=outer]( 3.51, -4.82 ){0.2}

\pscircle[linewidth=0.013,dimen=outer]( 3.58, -5.55  ){0.2}                
\pscircle[linewidth=0.013,dimen=outer](  3.9, -6.21   ){0.2}
\pscircle[linewidth=0.013,dimen=outer](  4.45, -6.70   ){0.2}
\pscircle[linewidth=0.013,dimen=outer]( 5.133, -6.97  ){0.2}
\pscircle[linewidth=0.013, dimen=outer](  5.867, -6.97  ){0.2}

\pscircle[linewidth=0.013,dimen=outer]( 6.55, -6.70  ){0.2}       
\pscircle[linewidth=0.013,dimen=outer]( 7.1, -6.21  ){0.2}
\pscircle[linewidth=0.013,dimen=outer]( 7.42, -5.55   ){0.2}
\pscircle[linewidth=0.013,dimen=outer]( 7.49, -4.82  ){0.2}
\pscircle[linewidth=0.013, dimen=outer]( 7.29, -4.11   ){0.2}

\pscircle[linewidth=0.013,dimen=outer]( 6.85, -3.52 ){0.2}       
\pscircle[linewidth=0.013,dimen=outer]( 6.22, -3.14  ){0.2}

\rput(  5.5, -3.0 ){\tiny $13$}
\rput(  4.778, -3.14 ){\tiny $16$}
\rput( 4.15, -3.52  ){\tiny $10$}
\rput( 3.71, -4.11  ){\tiny $7$}
\rput( 3.51, -4.82   ){\tiny $3$}

\rput(  3.58, -5.55 )   {\tiny $0$}
\rput(  3.9, -6.21 )   {\tiny $14$}
\rput(  4.45, -6.70 )   {\tiny $1$}
\rput( 5.133, -6.97  )   {\tiny $5$}
\rput(  5.867, -6.97   )  {\tiny $9$}

\rput(  6.55, -6.70  ){\tiny $2$}
\rput(  7.1, -6.21 ){\tiny $4$}
\rput( 7.42, -5.55 ){\tiny $11$}
\rput(  7.49, -4.82  ){\tiny $15$}
\rput(  7.29, -4.11   ){\tiny $12$}

\rput(  6.85, -3.52  ){\tiny $8$}
\rput( 6.222, -3.14   ){\tiny $6$}


\psline[linewidth=0.013cm]( 5.3, -3.03782 )  ( 4.98, -3.09724  )
\psline[linewidth=0.013cm](4.60, -3.24339 )  ( 4.327, -3.41364 )
\psline[linewidth=0.013cm](4.028, -3.68621 )  ( 3.8337, -3.94429  )
\psline[linewidth=0.013cm](  3.65335, -4.30647 )  (3.565, -4.61752  )
\psline[linewidth=0.013cm](  3.52752, -5.02038 )  (  3.55736, -5.3424   )

\psline[linewidth=0.013cm]( 3.668, -5.73155 )  (  3.812, -6.02105 )
\psline[linewidth=0.013cm](  4.056, -6.34392 )  ( 4.29, -6.56179 )
\psline[linewidth=0.013cm]( 4.639, -6.77478 )  ( 4.94, -6.8916 )
\psline[linewidth=0.013cm](  5.338, -6.96595 )  (  5.662, -6.96595  )
\psline[linewidth=0.013cm](  6.06, -6.8916 )  ( 6.36, -6.77478 )

\psline[linewidth=0.013cm](6.7, -6.56179 )  (  6.94, -6.34392 )
\psline[linewidth=0.013cm]( 7.18, -6.02105 )  ( 7.33, -5.73155 )
\psline[linewidth=0.013cm]( 7.442, -5.3424 )  ( 7.47, -5.02038 )
\psline[linewidth=0.013cm](  7.435, -4.61752 )  ( 7.346, -4.30647  )
\psline[linewidth=0.013cm]( 7.166, -3.94429 )  ( 6.97, -3.68621 )

\psline[linewidth=0.013cm](  6.672, -3.41364 )  ( 6.397, -3.24339  )
\psline[linewidth=0.013cm]( 6.02, -3.09724 )  ( 5.7023, -3.03782 )

\psline[linewidth=0.013cm](  5.615, -3.15284 )  ( 7.308, -5.39449 )
\psline[linewidth=0.013cm]( 3.755, -5.61651 )  ( 6.374, -6.63125 )
\psline[linewidth=0.013cm]( 6.829, -3.71269 )  ( 6.57, -6.50973 )
\psline[linewidth=0.013cm]( 3.795, -4.27997 )  (5.047, -6.7945 )
\psline[linewidth=0.013cm]( 7.349, -4.944 )  ( 5.274, -6.83692 )
\psline[linewidth=0.013cm]( 4.725, -3.31927 )  ( 3.956, -6.02106 )
\psline[linewidth=0.013cm]( 3.67, -4.71464 )  ( 6.059, -3.23588 )
\psline[linewidth=0.013cm]( 4.34, -3.55717 )  ( 7.1, -4.07333 )

\psline[linewidth=0.013cm]( 4.299, -3.65615 )  (6.948, -6.0711 )
\psline[linewidth=0.013cm]( 3.772, -5.51073)  (7.295, -4.852 )
\psline[linewidth=0.013cm]( 5.518, -3.1983 )  (5.849, -6.76765 )
\psline[linewidth=0.013cm]( 4.567, -6.54151 )  (6.727, -3.68 )

\end{pspicture}}
\caption{\small   The graphs $H_i$ and $G_i$ for $i=0,1$ where $q=17$.
\label{fig:caseq17}}
\end{center}
\end{figure}

\item For $q=19$:

Let $\mathbb{Z}_{19}$ be a finite field of order $19$ and consider the graphs
$H_0$, $H_1$, $G_0$ and $G_1$ showed in Figure~\ref{fig:caseq19}.
  The graphs $G_0$ and $G_1$ are isomorphic, have order 19, girth 5, the vertices of the set $S= \{0,2,3,5,6,12,13,16,17 \}$  have degree 4 and  the other ones have degree 3. The weights or Cayley colors modulo $19$ of $G_0$ (and $G_1$) are $ \pm \{ 1, 4, 7, 8 \}$ (and $ \pm \{  2, 3, 5, 6, 9 \}$, respectively).
Regarding $H_0$ and $H_1$,  they are labeled with the elements of $\mathbb{Z}_{19}- S$ and verify   $E(H_0) \cap  E(H_1) = \emptyset$, $E(H_0) \cap  E(G_1) = \emptyset$ and
$E(H_1) \cap  E(G_0) = \emptyset$.




\begin{figure}[h!]
\begin{center}
\scalebox{1.05}
{\begin{pspicture}(1.9, -7.2 )(3., 1.40)

\definecolor{color0}{rgb}{0.0,0.,0.}

\rput(  0.,  2.4  ){\tiny$H_0$}
\rput( 0,   -2.6   ){\tiny $G_0$}
\rput(  5.5,  2.4  ){\tiny$H_1$}
\rput( 5.5,   -2.6   ){\tiny $G_1$}

\pscircle[linewidth=0.013,dimen=outer](0., 2.){0.2}
\pscircle[linewidth=0.013,dimen=outer](-1.9, 0.62){0.2}
\pscircle[linewidth=0.013,dimen=outer](-1.2, -1.6){0.2}
\pscircle[linewidth=0.013,dimen=outer]( 1.2, -1.6 ){0.2}
\pscircle[linewidth=0.013,dimen=outer]( 1.9, 0.62 ){0.2}

\pscircle[linewidth=0.013,dimen=outer]( 0., 1.1 ){0.2}
\pscircle[linewidth=0.013,dimen=outer]( -1.04, 0.34 ){0.2}
\pscircle[linewidth=0.013,dimen=outer]( -0.65, -0.89 ){0.2}
\pscircle[linewidth=0.013,dimen=outer]( 0.65, -0.89 ){0.2}
\pscircle[linewidth=0.013, dimen=outer]( 1.05, 0.34 ){0.2}


\psline[linewidth=0.013cm]( -0.17119, 1.87562 )  (  -1.73092, 0.742411  )
\psline[linewidth=0.013cm]( -1.83672, 0.416788 )  (-1.24096, -1.41679)
\psline[linewidth=0.013cm]( -0.963968, -1.61803 )  (0.963968, -1.61803  )
\psline[linewidth=0.013cm]( 1.24096, -1.41679 )  ( 1.83672, 0.416788  )
\psline[linewidth=0.013cm]( 0.17119, 1.87562 )  ( 1.73092, 0.742411  )


\psline[linewidth=0.013cm](-0.0581907, 0.920907 )  ( -0.588373, -0.710826 )
\psline[linewidth=0.013cm]( -0.893817, 0.229233 )  ( 0.494218, -0.779233 )
\psline[linewidth=0.013cm]( -0.857853, 0.339919 )  ( 0.857853, 0.339919  )
\psline[linewidth=0.013cm](  -0.494218, -0.779233 )  ( 0.893817, 0.229233  )
\psline[linewidth=0.013cm]( 0.0581907, 0.920907 )  (0.588373, -0.710826)


\psline[linewidth=0.013cm]( 0,  1.8 )( 0,1.3 )
\psline[linewidth=0.013cm]( -1.73092, 0.562411 ) ( -1.21735, 0.395542 )
\psline[linewidth=0.013cm]( 1.73092, 0.562411  )( 1.21735, 0.395542 )
\psline[linewidth=0.013cm]( 1.06977, -1.47241 )( 0.752365, -1.03554 )
\psline[linewidth=0.013cm]( -1.06977, -1.47241 )( -0.752365, -1.03554 )

\rput( 0, 2.0 ){\tiny $7$}
\rput(-1.9, 0.62 ){\tiny $8$}
\rput( -1.2, -1.6 ){\tiny $9$}
\rput(  1.2, -1.6  ){\tiny $10$}
\rput( 1.9, 0.62  ){\tiny $11$}

\rput(  0., 1.1  ){\tiny $15$}
\rput(  -1.04616, 0.339919  ){\tiny $4$}
\rput(  -0.646564, -0.889919  ){\tiny $1$}
\rput(  0.646564, -0.889919  ){\tiny $14$}
\rput( 1.04616, 0.339919  ){\tiny $18$}

\pscircle[linewidth=0.013,dimen=outer](  5.5, 2.  ){0.2}
\pscircle[linewidth=0.013,dimen=outer]( 3.6, 0.62){0.2}
\pscircle[linewidth=0.013,dimen=outer]( 4.3, -1.6){0.2}
\pscircle[linewidth=0.013,dimen=outer]( 6.7, -1.6 ){0.2}
\pscircle[linewidth=0.013,dimen=outer]( 7.4, 0.62 ){0.2}

\pscircle[linewidth=0.013,dimen=outer]( 5.5, 1.1 ){0.2}
\pscircle[linewidth=0.013,dimen=outer]( 4.46, 0.34 ){0.2}
\pscircle[linewidth=0.013,dimen=outer]( 4.85, -0.89 ){0.2}
\pscircle[linewidth=0.013,dimen=outer]( 6.15, -0.89 ){0.2}
\pscircle[linewidth=0.013, dimen=outer]( 6.55, 0.34 ){0.2}


\psline[linewidth=0.013cm]( 5.32881, 1.87562 )  ( 3.76908, 0.742411  )
\psline[linewidth=0.013cm]( 3.66328, 0.416788 )  (4.25904, -1.41679)
\psline[linewidth=0.013cm]( 4.53603, -1.61803 )  (6.46397, -1.61803  )
\psline[linewidth=0.013cm]( 6.74096, -1.41679 )  ( 7.33672, 0.416788  )
\psline[linewidth=0.013cm]( 5.67119, 1.87562 )  ( 7.23, 0.742411   )


\psline[linewidth=0.013cm]( 5.44181, 0.920907 )  (  4.91163 , -0.710826 )
\psline[linewidth=0.013cm](  4.6  , 0.229233 )  ( 5.99422, -0.779233 )
\psline[linewidth=0.013cm](  4.64215  , 0.339919 )  ( 6.35785 , 0.339919  )
\psline[linewidth=0.013cm](   5.00578  , -0.779233 )  ( 6.39382 , 0.229233  )
\psline[linewidth=0.013cm]( 5.5581907, 0.920907 )  (6.08837  , -0.710826)


\psline[linewidth=0.013cm]( 5.5,  1.8 )( 5.5, 1.3 )
\psline[linewidth=0.013cm]( 3.76908, 0.562411 ) (4.28265,  0.395542 )
\psline[linewidth=0.013cm]( 7.2309, 0.562411  )( 6.71735, 0.395542 )
\psline[linewidth=0.013cm]( 6.56977, -1.47241 )( 6.25237, -1.03554 )
\psline[linewidth=0.013cm]( 4.43023, -1.47241 )( 4.74764, -1.03554 )

\rput( 5.5, 2.0 ){\tiny $14$}
\rput(3.6, 0.62 ){\tiny $18$}
\rput( 4.3, -1.6 ){\tiny $15$}
\rput(  6.67, -1.6  ){\tiny $10$}
\rput( 7.38, 0.62  ){\tiny $8$}

\rput(  5.5, 1.1  ){\tiny $1$}
\rput(  4.45384 , 0.339919  ){\tiny $9$}
\rput(  4.85344 , -0.889919  ){\tiny $4$}
\rput(  6.14656, -0.889919  ){\tiny $7$}
\rput( 6.54616, 0.339919  ){\tiny $11$}


\pscircle[linewidth=0.013,dimen=outer](  0, -3.0  ){0.2}                     
\pscircle[linewidth=0.013,dimen=outer]( -0.65, -3.1  ){0.2}            
\pscircle[linewidth=0.013,dimen=outer]( -1.23, -3.42 ){0.2}
\pscircle[linewidth=0.013,dimen=outer]( -1.67, -3.91  ){0.2}
\pscircle[linewidth=0.013,dimen=outer](  -1.94, -4.51  ){0.2}

\pscircle[linewidth=0.013,dimen=outer](  -1.99, -5.17   ){0.2}                
\pscircle[linewidth=0.013,dimen=outer](  -1.83, -5.80    ){0.2}
\pscircle[linewidth=0.013,dimen=outer]( -1.47, -6.35   ){0.2}
\pscircle[linewidth=0.013,dimen=outer](   -0.952, -6.76  ){0.2}
\pscircle[linewidth=0.013, dimen=outer](   -0.329, -6.97  ){0.2}

\pscircle[linewidth=0.013,dimen=outer]( 0.329, -6.97    ){0.2}       
\pscircle[linewidth=0.013,dimen=outer](0.952, -6.76    ){0.2}
\pscircle[linewidth=0.013,dimen=outer](  1.47, -6.35    ){0.2}
\pscircle[linewidth=0.013,dimen=outer](  1.83, -5.80  ){0.2}
\pscircle[linewidth=0.013, dimen=outer](1.99, -5.17    ){0.2}

\pscircle[linewidth=0.013,dimen=outer](  1.94, -4.51   ){0.2}       
\pscircle[linewidth=0.013,dimen=outer]( 1.67, -3.91  ){0.2}
\pscircle[linewidth=0.013,dimen=outer](  1.23, -3.42                        ){0.2}
\pscircle[linewidth=0.013,dimen=outer](  0.649, -3.11  ){0.2}

\rput(  0, -3.0 ){\tiny $0$}
\rput( -0.649, -3.11 ){\tiny $1$}
\rput( -1.23, -3.42   ){\tiny $2$}
\rput(  -1.67, -3.91   ){\tiny $3$}
\rput(  -1.94, -4.51   ){\tiny $4$}

\rput( -1.99, -5.17  ){\tiny $5$}
\rput(  -1.83, -5.80  ){\tiny $6$}
\rput( -1.47, -6.35  ){\tiny $7$}
\rput(  -0.952, -6.76  ){\tiny $8$}
\rput(  -0.329, -6.97    ){\tiny $9$}

\rput(   0.329, -6.97  ){\tiny $10$}
\rput( 0.952, -6.76  ){\tiny $11$}
\rput(  1.47, -6.35  ){\tiny $12$}
\rput( 1.83, -5.80  ){\tiny $13$}
\rput(  1.99, -5.17   ){\tiny $14$}

\rput(  1.94, -4.51  ){\tiny $15$}
\rput( 1.67, -3.91  ){\tiny $16$}
\rput( 1.23, -3.42   ){\tiny $17$}
\rput( 0.649, -3.11   ){\tiny $18$}


\psline[linewidth=0.013cm]( -0.474061, -3.07911)  ( -0.175338, -3.02926 )
\psline[linewidth=0.013cm]( -1.072, -3.337 )  ( -0.8, -3.19 )
\psline[linewidth=0.013cm](  -1.559, -3.779 )  (  -1.349, -3.5525 )
\psline[linewidth=0.013cm]( -1.86739, -4.346)  (-1.74574, -4.068 )
\psline[linewidth=0.013cm]( -1.97849, -4.988)  (-1.95348, -4.68 )

\psline[linewidth=0.013cm]( -1.87518, -5.631)  ( -1.94953, -5.337 )
\psline[linewidth=0.013cm]( -1.56867, -6.2 )  (  -1.73432, -5.952  )
\psline[linewidth=0.013cm](  -1.09217, -6.649 )  (  -1.33117, -6.463 )
\psline[linewidth=0.013cm]( -0.49732, -6.915)  ( -0.783764, -6.8166 )
\psline[linewidth=0.013cm]( 0.151427, -6.972 )  ( -0.151427, -6.97  )

\psline[linewidth=0.013cm](  0.783764, -6.816 )  (  0.49732, -6.915  )
\psline[linewidth=0.013cm]( 1.33117, -6.463 )  (1.09217, -6.649  )
\psline[linewidth=0.013cm](  1.73432, -5.95  )  ( 1.56867, -6.205  )
\psline[linewidth=0.013cm](  1.94953, -5.337 )  ( 1.87518, -5.631  )
\psline[linewidth=0.013cm](  1.95348, -4.686 )  ( 1.97849, -4.988  )

\psline[linewidth=0.013cm](  1.74574, -4.0689  )  ( 1.86739, -4.346  )
\psline[linewidth=0.013cm](  1.34882, -3.552 )  ( 1.55394, -3.775  )
\psline[linewidth=0.013cm](  0.805736, -3.1929  )  ( 1.07209, -3.337   )
\psline[linewidth=0.013cm](  0.175338, -3.029 )  ( 0.474061, -3.079  )

\psline[linewidth=0.013cm](  1.05645, -3.4217 )  (  -1.05645, -3.4217 )
\psline[linewidth=0.013cm]( -1.7837, -4.388  )  ( -0.155104, -3.12  )
\psline[linewidth=0.013cm](  1.7837, -4.388 )  (  0.155104, -3.12 )
\psline[linewidth=0.013cm](  1.27668, -3.6122 )  (1.7833, -5.6128 )
\psline[linewidth=0.013cm]( -1.65869, -5.8969 )  ( 0.15633, -6.879 )

\psline[linewidth=0.013cm](  -1.7833, -5.6128 )  (  -1.27668, -3.6122  )
\psline[linewidth=0.013cm]( 1.65869, -5.8969 )  ( -0.15633, -6.879  )
\psline[linewidth=0.013cm]( 1.49096, -3.969  )  (  -1.80979, -5.102  )
\psline[linewidth=0.013cm]( 1.54302, -4.048 )  (  -0.820583, -6.616  )

\psline[linewidth=0.013cm](  1.80979, -5.102 )  ( -1.49096, -3.969 )
\psline[linewidth=0.013cm](  0.543357, -3.27 )  ( -1.36541, -6.192  )
\psline[linewidth=0.013cm](  1.36541, -6.1922  )  ( -0.543357, -3.27 )

\psline[linewidth=0.013cm](  0.820583, -6.616 )  (-1.54302, -4.0487  )
\psline[linewidth=0.013cm]( 1.29822, -6.295 )  ( -1.81994, -5.2246  )


\pscircle[linewidth=0.013,dimen=outer](  5.5, -3.0  ){0.2}                     
\pscircle[linewidth=0.013,dimen=outer]( 4.85, -3.1  ){0.2}            
\pscircle[linewidth=0.013,dimen=outer](  4.27 , -3.42 ){0.2}
\pscircle[linewidth=0.013,dimen=outer]( 3.83, -3.91  ){0.2}
\pscircle[linewidth=0.013,dimen=outer](  3.56,   -4.51  ){0.2}

\pscircle[linewidth=0.013,dimen=outer](   3.51 , -5.17   ){0.2}                
\pscircle[linewidth=0.013,dimen=outer](   3.67 , -5.80    ){0.2}
\pscircle[linewidth=0.013,dimen=outer](   4.03 , -6.35   ){0.2}
\pscircle[linewidth=0.013,dimen=outer](   4.548 , -6.76  ){0.2}
\pscircle[linewidth=0.013, dimen=outer](    5.171  , -6.97  ){0.2}

\pscircle[linewidth=0.013,dimen=outer](  5.829 , -6.97    ){0.2}       
\pscircle[linewidth=0.013,dimen=outer](  6.452 , -6.76    ){0.2}
\pscircle[linewidth=0.013,dimen=outer](  6.97  , -6.35    ){0.2}
\pscircle[linewidth=0.013,dimen=outer](  7.33 , -5.80  ){0.2}
\pscircle[linewidth=0.013, dimen=outer](  7.49  , -5.17    ){0.2}

\pscircle[linewidth=0.013,dimen=outer](  7.44 , -4.51   ){0.2}       
\pscircle[linewidth=0.013,dimen=outer](  7.17  , -3.91  ){0.2}
\pscircle[linewidth=0.013,dimen=outer](  6.73 , -3.42                        ){0.2}
\pscircle[linewidth=0.013,dimen=outer](  6.149, -3.11  ){0.2}

\rput(  5.5 , -3.0 ){\tiny $5$}
\rput( 4.851 , -3.11 ){\tiny $7$}
\rput(   4.27, -3.42   ){\tiny $16$}
\rput(  3.83 , -3.91   ){\tiny $13$}
\rput(   3.56, -4.51   ){\tiny $10$}

\rput(  3.51, -5.17  ){\tiny $0$}
\rput(  3.67 , -5.80  ){\tiny $6$}
\rput(   4.03, -6.35  ){\tiny $11$}
\rput(   4.548, -6.76  ){\tiny $1$}
\rput(  5.171 , -6.97    ){\tiny $14$}

\rput(  5.829 , -6.97  ){\tiny $9$}
\rput(   6.452 , -6.76  ){\tiny $4$}
\rput(  6.97 , -6.35  ){\tiny $2$}
\rput(  7.33 , -5.80  ){\tiny $12$}
\rput(  7.49 , -5.17   ){\tiny $18$}

\rput(   7.44 , -4.51  ){\tiny $15$}
\rput(  7.17, -3.91  ){\tiny $17$}
\rput(   6.73 , -3.42   ){\tiny $3$}
\rput(  6.149 , -3.11   ){\tiny $8$}


\psline[linewidth=0.013cm](  5.0259 , -3.07911)  ( 5.32466 , -3.02926 )
\psline[linewidth=0.013cm](  4.428 , -3.337 )  (  4.7, -3.19 )
\psline[linewidth=0.013cm](  3.94  , -3.779 )  (  4.151 , -3.5525 )
\psline[linewidth=0.013cm](   3.6326, -4.346)  ( 3.75426 , -4.068 )
\psline[linewidth=0.013cm](   3.5215 , -4.988)  (  3.54652, -4.68 )

\psline[linewidth=0.013cm](  3.6248, -5.631)  (  3.55047 , -5.337 )
\psline[linewidth=0.013cm](  3.93133 , -6.2 )  (  3.76568  , -5.952  )
\psline[linewidth=0.013cm](  4.4078 , -6.649 )  (  4.16883, -6.463 )
\psline[linewidth=0.013cm](  5.0026, -6.915)  (  4.71624, -6.8166 )
\psline[linewidth=0.013cm](  5.6514 , -6.972 )  ( 5.34857 , -6.97  )

\psline[linewidth=0.013cm](  6.2837 , -6.816 )  (   5.99732 , -6.915  )
\psline[linewidth=0.013cm](   6.83117 , -6.463 )  ( 6.59217 , -6.649  )
\psline[linewidth=0.013cm](  7.23432 , -5.95  )  (  7.05394 , -6.205  )
\psline[linewidth=0.013cm](  7.44953 , -5.337 )  ( 7.37518 , -5.631  )
\psline[linewidth=0.013cm](  7.45348  , -4.686 )  (  7.47849 , -4.988  )

\psline[linewidth=0.013cm](  7.24574 , -4.0689  )  ( 7.36739 , -4.346  )
\psline[linewidth=0.013cm](  6.84882 , -3.552 )  (  7.05394 , -3.775  )
\psline[linewidth=0.013cm](  6.30574, -3.1929  )  (  6.57209 , -3.337   )
\psline[linewidth=0.013cm](  5.67534 , -3.029 )  (  5.974 , -3.079  )

\psline[linewidth=0.013cm](   6.55645 , -3.4217 )  (  4.44355, -3.4217 )
\psline[linewidth=0.013cm](  3.7163  , -4.388  )  (  5.3449 , -3.12  )
\psline[linewidth=0.013cm](  7.2837  , -4.388 )  (  5.655104, -3.12 )
\psline[linewidth=0.013cm](   6.77668 , -3.6122 )  (  7.2833, -5.6128 )
\psline[linewidth=0.013cm](   3.8413, -5.8969 )  ( 5.65633, -6.879 )

\psline[linewidth=0.013cm](  3.7167 , -5.6128 )  (  4.22332 , -3.6122  )
\psline[linewidth=0.013cm](  7.15869 , -5.8969 )  (  5.34367, -6.879  )
\psline[linewidth=0.013cm]( 6.99096, -3.969  )  (  3.69021, -5.102  )
\psline[linewidth=0.013cm]( 7.04302 , -4.048 )  (  4.67942 , -6.616  )

\psline[linewidth=0.013cm](  7.30979 , -5.102 )  ( 4.00904 , -3.969 )
\psline[linewidth=0.013cm](  6.04336, -3.27 )  (   4.13459, -6.192  )
\psline[linewidth=0.013cm](  6.86541 , -6.1922  )  ( 4.95664 , -3.27 )

\psline[linewidth=0.013cm](   6.32058 , -6.616 )  ( 3.95698 , -4.0487  )
\psline[linewidth=0.013cm](  6.79822, -6.295 )  ( 3.68006 , -5.2246  )

\end{pspicture}}
\caption{\small   The graphs $H_i$ and $G_i$ for $i=0,1$ where  $q=19$.
\label{fig:caseq19}}
\end{center}
\end{figure}
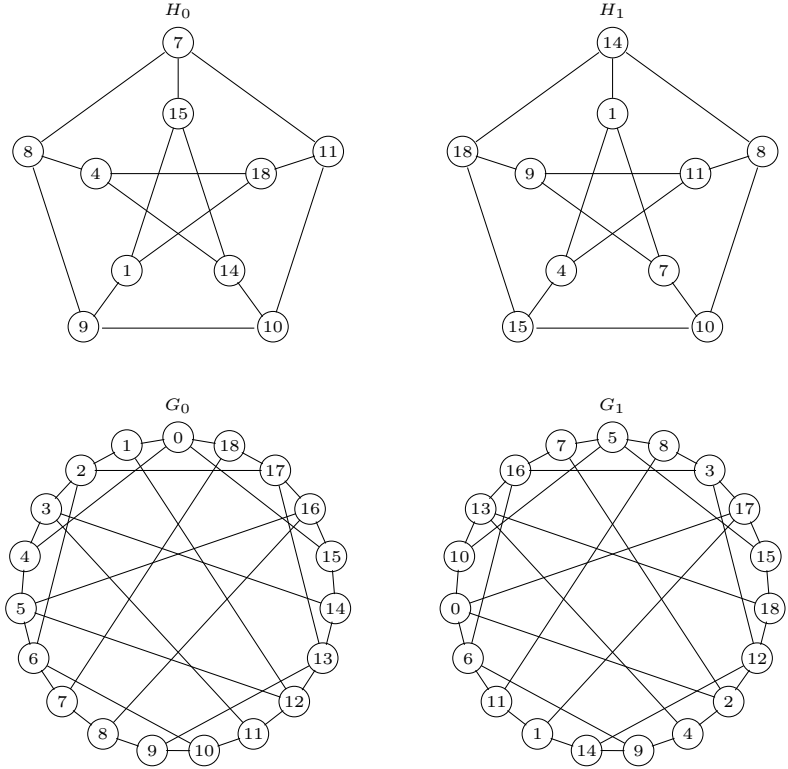

\end{itemize}


\noindent In the next result we apply Theorem \ref{teoC} to~$q \in \{16, 17, 19 \}$. The obtained  graph $\mathcal{C}_q(H_0,H_1,G_0,G_1)$ is a $(q+3,5)$-regular graph with less vertices than any other  $(q+3)$-regular graph of girth 5 so far known, and therefore  the upper bound $rec(k,5)$ for $k \in \{19, 20,22\}$ is improved.
As it is explained in the so-called Reduction 2  in \cite{AABL12},  referred as ``Deletion''   in \cite{F09},  by removing pairs of blocks $P_x$ and $L_m$  from $    \mathcal{C}_q (H_0,H_1,G_0,G_1)$, we also generate new graphs which improve $rec(k,5)$ for $k \in \{17, 18,21\}$.
\\

\begin{theorem}
The following upper bound $rec(k,5)$ on the order $n(k,5)$  of a $k$-regular cage of girth $5$ holds
{
\begin{center}
\begin{tabular}{|c||@{$\,$}c@{$\,$}|}
\hline
 $k$ &  \  $rec(k,5)$ \ \\
       \hline \hline
      17   &   436  \\
       \hline
     18  &  468   \\
  \hline
     19  &  500   \\
  \hline
     20 &  564   \\
  \hline
     21  &  666   \\
  \hline
     22  &  704   \\
       \hline
\end{tabular}
\end{center}}

\end{theorem}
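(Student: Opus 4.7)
The strategy is to apply Theorem \ref{teoC} with $r = 3$ to each of the three prime powers $q \in \{16, 17, 19\}$ supplied by Construction 1, and then to iterate the block-deletion reduction from \cite{AABL12,F09} in order to reach the smaller regularities $k \in \{17, 18, 21\}$.

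For each fixed $q$, the first step is to verify the hypotheses of Theorem \ref{teoC} by inspection of Figures \ref{fig:caseq16}--\ref{fig:caseq19}. In every case $G_0$ and $G_1$ are $(3,4)$-regular graphs of girth $5$ on vertex set $\mathbb{F}_q$; the common set $S \subset \mathbb{F}_q$ (of cardinality $6$, $7$, $9$ respectively) consists precisely of the degree-$4$ vertices of both $G_0$ and $G_1$, so $\mathbb{F}_q \setminus S$ has exactly $10$ elements and labels the vertices of the Petersen graphs $H_0, H_1$ as required by hypothesis $(ii)$ with $r=3$. The Cayley-color sets of $G_0$ and $G_1$ are disjoint by the color partitions explicitly recorded in Construction 1, and the three edge-disjointness conditions $E(H_0)\cap E(H_1) = E(H_0)\cap E(G_1) = E(H_1)\cap E(G_0) = \emptyset$ are checked directly against the vertex labelings displayed in the figures.

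With the hypotheses in place, Theorem \ref{teoC} produces a $(q+3)$-regular graph $\mathcal{C}_q(H_0,H_1,G_0,G_1)$ of girth at least $5$ on
$$|A| \;=\; |V(H_0)| + |V(H_1)| + 2q(q-1) \;=\; 20 + 2q(q-1)$$
vertices. Substituting $q = 16, 17, 19$ yields orders $500, 564, 704$, establishing $rec(19,5) \le 500$, $rec(20,5) \le 564$ and $rec(22,5) \le 704$. For the remaining values, apply Reduction 2 of \cite{AABL12} (``Deletion'' in \cite{F09}): each removal of a pair of blocks $(P_x, L_m)$ with $x, m \in \mathbb{F}_q \setminus \{0\}$ decreases the regularity by one and deletes exactly $2q$ vertices while preserving the girth condition. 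One deletion applied to the $q=19$ construction gives a $(21,5)$-graph on $704 - 2\cdot 19 = 666$ vertices; applied to the $q=16$ construction, one and two deletions yield an $(18,5)$-graph on $500 - 32 = 468$ vertices and a $(17,5)$-graph on $500 - 64 = 436$ vertices.

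The genuine obstacle lies not in this verification but in the combinatorial design already performed in Construction 1: exhibiting, for each $q \in \{16, 17, 19\}$, four graphs with the prescribed degree profile, girth $5$, disjoint Cayley colors and the three pairwise edge-disjointness conditions is a highly constrained search, and it is precisely this labeling that sharpens the previous bounds of \cite{J05,S08,AABL12}. Once the figures are accepted as presented, the proof reduces to applying Theorem \ref{teoC}, counting $|A|$, and iterating the block deletion.
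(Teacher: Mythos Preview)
Your proposal is correct and follows essentially the same approach as the paper: apply Theorem~\ref{teoC} with $r=3$ to the quadruples $(H_0,H_1,G_0,G_1)$ from Construction~1 for $q\in\{16,17,19\}$, compute the order $|A|=20+2q(q-1)$ (equivalently $2q^2-2(q-10)$ as the paper writes it), and then use block deletion to obtain the lower regularities $k\in\{17,18,21\}$. The organization and emphasis match the paper's proof almost exactly.
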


\begin{proof}
Using the  graphs given in Construction 1, we obtain for $q\in\{16,17,19\}$ the graph $\mathcal{C}_{q} ( H_0, H_1,G_0,G_1)$ as in Theorem \ref{teoC},  which has girth $5$. Moreover we have the following considerations:

\noindent For $q=16$, $\mathcal{C}_{16} (H_0, H_1,G_0,G_1)$ is a $(19,5)$-graph of order $2 \cdot 16^2 - 12= 500$ implying that any $(19,5)$-cage has at most $500$ vertices.
Removing of $\mathcal{C}_{16} ( H_0, H_1,G_0,G_1)$ (using the operation called ``Reduction 2"  in \cite{AABL12}) a block of lines $L_{m}$ and a block of points $P_{x}$, for $x, m \in (\mathbb{Z}_2)^4- \{ 0000 \}$, we construct a $18$-regular graph with $500-2 \cdot 16=468$ vertices.  Similarly, deleting from this last graph another pair of blocks we obtain a  $17$-regular graph of girth~$5$ with $436$ vertices. Each of these $k$-regular graphs ($k= 17,18,19$) has $12 $ vertices less than the ones constructed by Schwenk in~\cite{S08}.

 \noindent For $q=17$, $\mathcal{C}_{17}  (H_0, H_1,G_0,G_1)$ is a $(20,5)$-graph of order $2 \cdot 17^2 - 14= 564$, which implies that a $(20,5)$-cage has at most $564$ vertices.

\noindent For $q=19$, $ \mathcal{C}_{19} ( H_0, H_1,G_0,G_1)$ is a $(22,5)$-graph of order $2 \cdot 19^2 - 18= 704$, which also implies that  any $(22,5)$-cage has at most $704$ vertices.
Newly, deleting any block of points and any block of lines (except $P_{0}$ and $L_{0}$ blocks), it is straightforward to check out that  $n(21,5) \le 666$.
\end{proof}

\begin{remark}
It is important to note that the construction of a $(q+3)$-regular graph of girth at least $5$  using  bi-regular amalgams into a subgraph of $C_q$ involves the existence of two $3$-regular graphs $H_0$ and $H_1$ and two $(3,4)$-regular graphs $G_0$ and $G_1$ all of them with girth at least $5$. The  graph
$\mathcal{C}_q (H_0,H_1, G_0,G_1)$  has order $2 (q^2 - (q-n(H_0)) \ge 2 (q^2 - q +10).$ It means that  our construction is the best possible one for $q=16$ and $q= 17$, because a $4$-regular amalgam could only be possible for $q \ge n(4,5)=19$ (recall that the $(4,5)$-cage is the Robertson Graph that has order $19$).
\end{remark}

\section {Amalgamating into elliptic semiplanes of type $L$}

In this section we use the techniques given by Funk in~\cite{F09} to amalgamate a pair of suitable regular graphs into the Levi graph of an elliptic semiplane of type $L_q$. Recall that the semiplane of type  $\emph{L}$ is obtained by deleting, from the projective, a pair of non incident point and line, all the lines incident with the point and all the points incident with the line. Moreover, the Levi graph, denoted by $L_q$, is bipartite, $q$-regular and has $2(q^2-1)$ vertices of which $q^2-1$ are points and $q^2-1$ are lines in the elliptic semiplane, both partitioned into $q+1$ parallel classes of $q-1$ elements each.


\noindent We divide the section into two parts. First we construct the regular graphs $G_0$, $G_1$ to amalgamate and later we describe the resulting graph $\mathcal{L}_q(G_0,G_1)$.

\subsection {Constructions of regular graphs of girth five}

To apply the Funk's techniques we need to construct two regular graphs with the same order, girth at least five and having disjoint Cayley colors, one of them to amalgamate in the point blocks and the other in the line blocks of $L_q$.

\noindent Let $\mathbb{Z}_n$ be the set of integers modulo $n$, and $J=\{k_1, \ldots, k_w\}\subset \mathbb{Z}_n-0$. Recall that a\emph{  circulant graph $Z_n(J)$} is a graph with vertex set $\mathbb{Z}_n$ and edges $\alpha \beta$ where }
$ \beta - \alpha \in J$. Let $n=2t$ and suppose that every element  of $J$ is odd.  We denote by $ S_{2t}( k_1,   \ldots, k_w) $  the subgraph of the circulant graph $\mathbb{Z}_{2t}( k_1,   \ldots, k_w)$ with edge set $\{  \{  2v, 2v+k_j \} :  0  \le  v  \le t-1,  \; 1  \le  j   \le  w  \}$
where the sum is taken module $2t$. Moreover,  we denote by  $ S_{\infty} ( k_1,   \ldots, k_w) $ the (infinite) graph when $\mathbb{Z}_{2t}=\mathbb{Z}$.

\noindent In the following lemma we describe some relevant properties of this graph:

\begin{lemma}  \label{oddsemicirculant}
Given an integer $t \ge 5 $, and  a sequence $k_1,  \ldots, k_w$  of different odd elements from  $\mathbb{Z}_{2t}$,
 the graph $ S_{2t}( k_1,   \ldots, k_w) $ is  $w$-regular, bipartite and has at most $w$ Cayley colors in $\mathbb{Z}_{2t}$. Moreover, the girth of $ S_{2t}( k_1,   \ldots, k_w) $ is at least~$6$    iff all the numbers $k_i-k_j$ are different for $i \neq j $ and $ 1  \le i,  j   \le  w$. These properties hold for $2t=\infty$.
\end{lemma}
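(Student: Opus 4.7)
The plan is to verify the three structural claims by direct inspection of the edge set and then analyze $4$-cycles to pin down the girth. Since every $k_j$ is odd, each edge $\{2v,\,2v+k_j\}$ joins an even residue to an odd residue in $\mathbb{Z}_{2t}$, so the graph is bipartite with the even/odd residues as parts. I would count neighbors: an even vertex $2v$ has the $w$ distinct neighbors $2v+k_1,\dots,2v+k_w$ (distinct because the $k_j$ are), and an odd vertex $u$ has the $w$ distinct neighbors $u-k_1,\dots,u-k_w$, yielding $w$-regularity. The edge $\{2v,\,2v+k_j\}$ carries Cayley color $\pm k_j$, so the set of colors that appears is contained in $\{\pm k_1,\ldots,\pm k_w\}$ and has at most $w$ elements (with equality failing precisely when some $k_i\equiv -k_j\pmod{2t}$).

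The substantive part is the girth characterization. Bipartiteness already gives girth $\ge 4$, so only $4$-cycles need be excluded. I would parametrize a putative $4$-cycle as $2v_1 - u_1 - 2v_2 - u_2 - 2v_1$, writing $u_1=2v_1+k_a=2v_2+k_b$ and $u_2=2v_1+k_d=2v_2+k_c$, which forces
\[
k_a-k_b \;=\; 2(v_2-v_1) \;=\; -(k_c-k_d) \;=\; k_d-k_c.
\]
Requiring the four vertices to be distinct translates into $a\ne b$, $c\ne d$, and $(a,b)\ne (d,c)$; the remaining non-degeneracies $a\ne d$ and $b\ne c$ then follow automatically, since $a=d$ together with the equation above would force $k_b=k_c$ and hence $(a,b)=(d,c)$, and symmetrically for $b=c$. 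Thus a $4$-cycle exists iff the equation $k_a-k_b=k_d-k_c$ has a non-trivial solution with $a\ne b$, $c\ne d$, i.e., iff the differences $k_i-k_j$ with $i\ne j$ are not all distinct. For the reverse implication, any such collision yields an actual $4$-cycle: the difference $k_a-k_b$ is even (being the difference of two odds), so the congruence $2(v_2-v_1)\equiv k_a-k_b\pmod{2t}$ has a solution with $v_1\ne v_2$, and this data assembles into a genuine $4$-cycle in $S_{2t}(k_1,\dots,k_w)$.

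Finally, the case $2t=\infty$ is immediate: all the arguments above run verbatim over $\mathbb{Z}$ and are in fact cleaner, since $k_a\ne k_b$ in $\mathbb{Z}$ directly produces $v_1\ne v_2$ with no modular bookkeeping required. The step I expect to be the main obstacle is the $4$-cycle bookkeeping in paragraph two: one must verify that the Sidon-type condition \emph{``all $k_i-k_j$ distinct for $i\ne j$''} matches exactly the set of non-trivial coincidences producing $4$-cycles, ruling out the spurious degenerate cases $a=d$ and $b=c$ by using the distinctness of the $k_j$'s.
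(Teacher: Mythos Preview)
Your argument is correct and follows essentially the same route as the paper: parametrize a putative $4$-cycle by its edge labels, derive the equation $k_i-k_j=k_q-k_p$, and conclude via the Sidon-type condition. Your proof is in fact more complete than the paper's: the paper only writes out the implication ``all $k_i-k_j$ distinct $\Rightarrow$ girth $\ge 6$'' and leaves the converse implicit, whereas you carry out both directions and handle the degenerate index coincidences explicitly. One minor remark on your bookkeeping: the constraint $a\ne d$ (and symmetrically $b\ne c$) actually follows \emph{directly} from $u_1\ne u_2$, since $u_1=2v_1+k_a$ and $u_2=2v_1+k_d$; you derived it instead from the difference equation together with $(a,b)\ne(d,c)$, which also works but is slightly roundabout.
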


\begin{proof}
Given an odd element $k_j \in \mathbb{Z}_{2t}$, the set of edges $\{  \{  2v,2v+k_j \} :  0  \le  v  \le t-1 \}$ defines a matching between   the  vertices with even label  and the   ones with odd label in $\mathbb{Z}_{2t} $.  Therefore, $G= S_{2t}( k_1,   \ldots, k_w) $ is $w$-regular, bipartite and has even girth $g \ge 4$.

\noindent Assume the numbers $k_i-k_j$ are different for $i \neq j $ and $ 1  \le i,  j   \le  w$. We prove that the girth of $G$ is greater or equal than 6. Suppose that there exists a $4$-cycle $ v_0 v_1  v_2  v_3 v_0$. By reordering,  we may take  $v_0$, $v_2$   even and $v_1$, $v_3$  odd. So, $v_1=v_0+k_i$, $v_2=v_1-k_j$, $v_3=v_2+k_p$, $v_0=v_3-k_q$ with  $i \neq j$, $ p \neq q,   p\neq j,   q \neq i.  $ Then, $k_i-k_j+k_p-k_q=0 $ and $k_i-k_j= k_q-k_p$ in $\mathbb{Z}_{2t}$ which is a contradiction, since by hypothesis all these numbers are different. Hence the girth of $G$ must be at least 6 because it is bipartite. The proof is the same when   $\mathbb{Z}_{2t}= \mathbb{Z}$, taking into account that in this case the equalities are considered in $ \mathbb{Z}.$
\end{proof}

\noindent The $(q+1,6)$-cages, with $q$ a prime power, are  known examples of this type of graphs.  For instance, the Heawood graph can be constructed as $S_{14} (1,-1,5)$. They can also be represented by using  \emph{perfect difference sets}  (see \cite{J05, S38})  and  as the Levi graphs of the projective plane over the field $\mathbb{F}_q$.  

\noindent We can use a graph $ S_{2t}( k_1,   \ldots, k_w) $ with  girth at least $6$  to  construct a new regular graph with the same order, greater degree and girth at least five.

\begin{definition}

Given an   integer $t \ge 5$,   a sequence  of different odd elements $k_1,  \ldots, k_w$ and two different even elements $ 0<P, Q<t $ from  $\mathbb{Z}_{2t}$,
we denote by $ S_{2t}( P, Q;  k_1,   \ldots, k_w  ) $ the graph obtained adding to    $ S_{2t}(  k_1,   \ldots, k_w  ) $  the new  edges  $\{ 2v, 2v+P \}$   and $\{2v+1,2v+1+Q\}$, where the sum is taken modulo 2t.
The graph $ S_{ \infty} ( P, Q;  k_1,   \ldots, k_w  ) $ is defined in a similar way over $ \mathbb{Z}$.
\end{definition}

\noindent Notice that if $P$ divides $2t$ the subgraph of $S_{2t}( P, Q; k_1,   \ldots, k_w) $  induced by the even numbers, is formed by $P/2$ cycles, each of them with size   $2t/P.$ Similar result holds when $Q$ divides ${2t}$ and the subgraph of $S_{2t}( P, Q; k_1,   \ldots, k_w) $ induced by the odd numbers.
The standard Generalized Petersen Graphs  with ${2t}$ vertices introduced by Coxeter in~\cite{C50} are obtained as $S_{2t}( 2,Q; 1)$ and  the $I$-graph $I( t,j,k)$ {in \cite{ZHP09} as $S_{2t}( 2j, 2k;1)$.
Funk uses in \cite{F09} a  $4$-regular generalization $P(k, \eta,  \nu)$ of the Petersen graph which  corresponds to  $S_{2k} (2,2 \eta; 1, 2\nu+1 )$. As illustration, Figure~\ref{cases24and30} (left) depicts $S_{24}( 2,10; 1,7)$ where the highlighted edges have weights $1$, $2$, $10$,  and
Figure~\ref{cases24and30} (right) shows the $(5,5)$-cage or Foster Graph, which corresponds to  $S_{30}(6,12; 1,-1,9) $, where the three  Petersen subgraphs contained in this cage are highlighted.

\begin{figure}[h!]
 \centering
\includegraphics[width=0.7\textwidth]{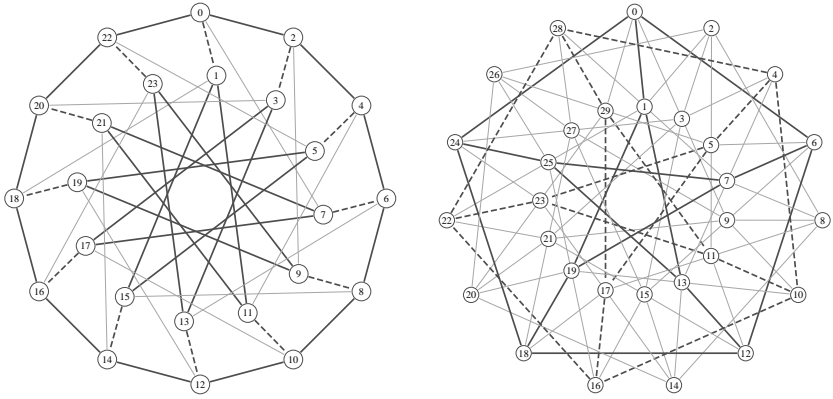}
\caption{\small  The graph $S_{24}( 2,10; 1,7)$ and the $(5,5)$-cage of Foster.}
\label{cases24and30}
\end{figure}

\noindent Next, we summarize some useful properties of these graphs.

\begin{lemma} \label{semicirculant}

The graph $ S_{2t}( P, Q; k_1,   \ldots, k_w) $, defined over  $\mathbb{Z}_{2t},$ is $(w+2)$-regular and has at most $w+2$ Cayley colors. Moreover, the girth of $ S_{2t}( P,Q; k_1,   \ldots, k_w) $ is at least 5 if and only if the following conditions hold:
\begin{itemize}
\item[(i)]  The numbers $3P, 4P, 3Q,  4Q$  are different from $0$ in $\mathbb{Z}_{2t}$.
\item[(ii)]  All the numbers $k_i-k_j$ are different  for $i \neq j $ and $ 1  \le i,  j   \le  w$.
\item[(iii)]  No relation  $k_i-k_j = \omega-\omega '$ holds, for a pair $\omega, \omega ' \in \Omega=\{0, \pm P, \pm Q \}.$
\end{itemize}
The result also holds when $Z_{2t}=Z$.

\end{lemma}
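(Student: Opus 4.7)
The plan is to split the verification into the degree and Cayley-colour count on one hand, and the girth analysis on the other.

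For the regularity and colours, I would start from Lemma~\ref{oddsemicirculant}, which gives the $w$-regularity of $S_{2t}(k_1,\dots,k_w)$ with colour set $\{\pm k_1,\dots,\pm k_w\}$. Because $P$ and $Q$ are even while each $k_j$ is odd, the added $P$-edges lie entirely inside the even part of $\mathbb{Z}_{2t}$ and the added $Q$-edges entirely inside the odd part, so they are disjoint from every $k_j$-edge. Each even vertex $2v$ is covered by the two distinct $P$-edges $\{2v-P,2v\}$ and $\{2v,2v+P\}$---distinct because $0<P<t$ gives $2P\not\equiv 0\pmod{2t}$---and analogously each odd vertex belongs to exactly two $Q$-edges. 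Hence the new graph is $(w+2)$-regular and has at most the $w+2$ Cayley colours $\{\pm k_1,\dots,\pm k_w,\pm P,\pm Q\}$.

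For the girth I would classify every 3- and 4-cycle by the parity pattern of its vertices, using that an edge is either even--even (weight $\pm P$), odd--odd (weight $\pm Q$), or bipartite (weight $\pm k_j$), and that traversing a bipartite edge contributes $+k_j$ from its even endpoint and $-k_j$ from its odd endpoint. Triangles split into the monochromatic case---a triangle in the $P$- or $Q$-matching, which exists iff $3P\equiv 0$ or $3Q\equiv 0$---and the two-of-one-parity-one-of-the-other case, whose closing equation reduces to $k_i-k_j=\pm P$ or $\pm Q$. Quadrilaterals split into four subcases: all four vertices the same parity ($4P\equiv 0$ or $4Q\equiv 0$); three vertices of one parity, whose closing equation gives $k_i-k_j=\pm 2P$ or $\pm 2Q$ after discarding the sign choice that forces a revisited vertex; an alternating two-and-two arrangement consisting of four bipartite edges, whose closing equation is $k_i-k_j=k_l-k_m$ with distinct pairs $\{i,j\}\neq\{l,m\}$; and a two-consecutive-evens-plus-two-consecutive-odds arrangement, whose closing equation is $k_i-k_j=\pm P\pm Q$. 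Since $\Omega-\Omega=\{0,\pm P,\pm 2P,\pm Q,\pm 2Q,\pm P\pm Q\}$, the non-alternating cases are exactly the situations excluded by (i) and (iii), while the alternating one is exactly the situation excluded by (ii). Reading this equivalence in both directions gives girth $\geq 5$ iff (i)--(iii) all hold, with the failure direction producing an explicit 3- or 4-cycle in each case.

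The extension to $\mathbb{Z}_{2t}=\mathbb{Z}$ is literally the same argument with the congruences deleted. The only real obstacle is the exhaustive bookkeeping: verifying that every parity pattern is accounted for exactly once and that the sign ambiguity in each bipartite edge is correctly absorbed into the difference form $k_i-k_j$ appearing in~(iii).
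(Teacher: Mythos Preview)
Your proposal is correct and follows essentially the same line as the paper: both view $S_{2t}(P,Q;k_1,\dots,k_w)$ as the bipartite graph $S_{2t}(k_1,\dots,k_w)$ together with the two monochromatic $2$-regular graphs $T_0,T_1$ on the even and odd vertices, then classify any $3$- or $4$-cycle by which of these pieces it visits. Your parity case split is just a slightly finer bookkeeping of the same decomposition, and in fact your write-up is more explicit than the paper's---the paper only spells out the ``single even vertex'' and ``two even, two odd'' mixed cases, leaving the symmetric single-odd case and the explicit construction of short cycles in the ``only if'' direction to the reader, whereas you list every parity pattern and match each resulting equation to the corresponding element of $\Omega-\Omega$.
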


\begin{proof} Denote $ G=S_{2t}( P, Q;  k_1,   \ldots, k_w  ) $. According to Lemma \ref{oddsemicirculant}, the  subgraph  $B= S_{2t}(  k_1,   \ldots, k_w) $ is an $w$-regular bipartite graph with  girth at least~$6$ iff item $(ii)$ is satisfied. The partite sets of $B$ are   the set of even vertices, denoted by $Ev$, and  the set of odd vertices, denoted by $Od$, of $\mathbb{Z}_{2t}$. Consider $T_0$ and $T_1$ the circulant graphs whose vertices are $Ev$ and $Od$ respectively, and whose edges are $\{2v,2v+P\}$ and $\{2v+1,2v+1+Q\}$, respectively.  Clearly, $T_0$ and $T_1$ are $2$-regular and condition $(i)$ that $3P, 4P, 3Q,  4Q  \neq 0  $ means that the subgraphs $T_0$ and $T_1$ have girth at least five.
Now, observe that
the graph $G $ is an amalgamation $B(T_0, T_1)$ obtained by adding to $Ev$ all the edges of $T_0$ and by adding to $Od$ all the edges of $T_1$. Hence $G= S_{2t}( P, Q; k_1,   \ldots, k_w) $ is $(w+2)$-regular.
 Next, suppose that $C$ is a cycle in $G$ of size $3$ or $4$ which  must contain even and odd vertices.
If $C$ has a single even vertex, we have either   $k_i \pm Q -k_j = 0$ or $k_i \pm 2Q -k_j = 0$, depending on the size of~$C$, and both equalities contradict $(iii).$  If $C$ contains two even  and  two odd vertices, we have   $k_i \pm Q -k_j \pm P = 0$, again contradicting $(iii)$.  Therefore $G$ has girth at least $5$ iff conditions $(i), (ii),(iii)$ are satisfied.
\end{proof}

\noindent Notice that it is  useful to take $Q=2P$ because in this case  there are only four differences  $\pm   \{  P,    2P,  3P, 4P \}$ to be avoided.
Furthermore, if $S_{2t}(P,Q; k_1, \ldots, k_w)$  has girth $g \ge 5$, the (infinite) graph $S_{\infty} (P,Q; k_1, \ldots, k_w)$ also satisfies $g \ge 5$. We are interested in the converse result.

\begin{definition}
We call span $D$ of a graph $S_{\infty} (P,Q; k_1, \ldots, k_w)$ the maximum element of the set $\{|k_i|, k_i-k_j,\omega-\omega'\}$, with $\omega,  \omega' \in  \{0,\pm P,\pm Q \}$.
\end{definition}

\begin{lemma}  \label{lemmaspan}
Given even positive  $P \neq Q$ and odd different  $k_1, \ldots, k_w$ integers,  let us consider a graph $S_{\infty}(P,Q; k_1, \ldots, k_w)$ with  girth $g \geq 5$ and span $D$. If $t  \ge  D+1 $, then
\begin{itemize}
\item[$(i)$]  $0< P,Q, |k_i| < t$, so the graph $S_{2t}(P,Q; k_1, \ldots, k_w)$  has regularity $w+2$.
\item[$(ii)$] $S_{2t}(P,Q; k_1, \ldots, k_w)$ has girth at least $5$.
\end{itemize}
\end{lemma}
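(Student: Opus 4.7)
The plan is to derive both parts from the single observation that the hypothesis $t\geq D+1$ prevents any ``wrap-around'' collapse modulo $2t$: every integer linear combination that could spoil regularity or create a short cycle in $S_{2t}(P,Q;k_1,\ldots,k_w)$ has absolute value strictly less than $2t$, and therefore behaves over $\mathbb{Z}_{2t}$ exactly as it does over $\mathbb{Z}$.

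For part $(i)$, the definition of the span immediately gives $D\geq |k_i|$ for each $i$ and, taking $\omega=P$, $\omega'=-P$, also $D\geq 2P$; likewise $D\geq 2Q$. Hence $t\geq D+1$ forces $0<P,Q\leq D/2<t$ and $0<|k_i|\leq D<t$, so that $P$, $Q$ and all the $k_i$ are nonzero elements of $\mathbb{Z}_{2t}$, and moreover the ``even'' generators $\pm P$ cannot coincide with $\pm Q$ since $P\neq Q$ and both lie strictly between $0$ and $t$. Since the $k_i$ are distinct odd integers of absolute value less than $t$, they also remain distinct modulo $2t$. Consequently $S_{2t}(P,Q;k_1,\ldots,k_w)$ is well defined and $(w+2)$-regular.

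For part $(ii)$, the goal is to verify in $\mathbb{Z}_{2t}$ the three conditions of Lemma~\ref{semicirculant}, leveraging the fact that they already hold in $\mathbb{Z}$ because $S_\infty(P,Q;k_1,\ldots,k_w)$ has girth at least $5$. Every forbidden equation there takes one of the shapes $3P=0$, $4P=0$, $3Q=0$, $4Q=0$, $(k_i-k_j)-(k_p-k_q)=0$, or $(k_i-k_j)-(\omega-\omega')=0$. The key step is to bound the integer on the left-hand side of each by $2D$ in absolute value: indeed $|4P|,|4Q|\leq 4\max(P,Q)\leq 2D$, while $|(k_i-k_j)-(k_p-k_q)|\leq 2D$ and $|(k_i-k_j)-(\omega-\omega')|\leq 2D$ follow directly from the definition of the span. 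Since $2D<2t$, any such integer vanishes modulo $2t$ if and only if it vanishes in $\mathbb{Z}$; hence each forbidden equality, ruled out over $\mathbb{Z}$ by the girth-$5$ hypothesis on $S_\infty$, is also ruled out in $\mathbb{Z}_{2t}$. Applying Lemma~\ref{semicirculant} to $\mathbb{Z}_{2t}$ then yields girth at least $5$ for $S_{2t}(P,Q;k_1,\ldots,k_w)$.

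The main obstacle is essentially bookkeeping: one must enumerate every forbidden configuration appearing in the statement of Lemma~\ref{semicirculant} and check case by case that the uniform bound $2D<2t$ really does cover it. The delicate point is recognising that even the ``triangle'' relations involving $3P$ or $4P$ are controlled by $D$ via $D\geq 2P$, so no additional hypothesis is needed beyond $t\geq D+1$.
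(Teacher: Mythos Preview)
Your proof is correct and follows essentially the same route as the paper: bound all the relevant integer expressions by $2D<2t$ so that none of them can wrap around modulo $2t$, then invoke Lemma~\ref{semicirculant}. The one noteworthy difference is that you are more careful about condition~$(i)$ of Lemma~\ref{semicirculant} (the requirement $3P,4P,3Q,4Q\neq 0$ in $\mathbb{Z}_{2t}$): you extract $D\geq 2P$ and $D\geq 2Q$ from the span definition to get $4P,4Q\leq 2D<2t$, whereas the paper simply asserts that this condition ``has been explicitly stated'' without spelling out the bound. Your extra line there is a genuine clarification.
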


\begin{proof} By definition, $0 < P, Q \leq D$ and $-D \leq k_i \leq D$. As $ t \ge D+1$, item $(i)$ is immediate. Let also see that $S_{2t}(P,Q; k_1, \ldots, k_w)$  has girth $g \ge 5$.
Given two different pairs of odd weights, we have $k_i-k_j \neq k_p-k_q$ in $ \mathbb{Z}$. Also, from the definition of $D$, we have
$-D \leq  k_i-k_j,  k_p-k_q \leq D$ and hence, $-{2t} < (k_i-k_j)-( k_p-k_q) <{2t}$. So,  $k_i-k_j \neq k_p-k_q$ in $ \mathbb{Z}_{2t}$. The same argument shows $k_i-k_j \neq \omega-\omega'$ in $ \mathbb{Z}_{2t}$.
These are the  conditions $(ii), (iii)$ of Lemma~\ref{semicirculant}. Notice that  condition $(i)$ of the  Lemma~\ref{semicirculant} has been explicitly stated.
\end{proof}

\noindent As an example, let us mention that the  graph $S_{\infty}(2,4; 3,-7)$ has girth $5$ and span $D=10$.  Therefore, the graph $S_{2t}(2,4; 3,-7)$ is a 4-regular with girth $5$ for orders ${2t} \ge 22$.

\noindent In the next subsection  we construct two pairs  of regular graphs of girth~$5$ suitable for amalgamation into $L_q$ for some values of $q$.

\subsection {Elliptic semiplanes of type  {\emph{L}}}

Recall that the Levi Graph of an elliptic semiplane of type ${\emph{L}}$ is denoted by $L_q$. Following the terminology of Funk in \cite{F09} we say that two $r$-regular graphs $G_0$ and $G_1 $  with girth at least five are \emph{suitable for amalgamation into the elliptic semiplane  $L_q$} if  they are labeled with the elements of the cyclic group $(\mathbb{Z}_{q-1},+)$ with disjoint sets of Cayley colors in this group.
When~$q$ is odd, the fact that $\mathbb{Z}_{q-1}$ has $q-1$ elements suggests the use of this semiplane, because $r$-regular graphs with odd degree  have even order.

\noindent As in Section~3, the  amalgamation of a pair of $r$-regular suitable graphs into the  elliptic semiplane  $L_q$ gives a $(q+r, 5)$-graph $ \mathcal{L}_q (G_0, G_1).$ It has  $2 (q^2-1)$ vertices and deleting   pairs of blocks of vertices from $ \mathcal{L}_q (G_0, G_1) $, for regularities $k \leq q+r$, we have
\begin{equation}  \label{eq:delete2}
n(k,5) \leq 2 (q-1) (k-r+1).
\end{equation}

\noindent With $q=19$ vertices it is possible to construct a unique $4$-regular graph with girth $5$, the $(4,5)$-cage due to Robertson in~\cite{R64}. The use of the highest value of
$r \geq 4 $ for a given  $q>19$ increases the accuracy of the inequality (\ref{eq:delete2}).
Funk in~\cite{F09} constructs the best possible regular amalgams for $q \in \{  23,25,27 \}$ and hence we focus on primes $q \ge 29$.  Next, we give a construction of graphs which provide accurate amalgams for $q\in\{29,31, 37,41, 43,47\}$. 
\\

\noindent {\bf{Construction 2:}}

\begin{itemize}
\item For $q=29$:

Consider the graphs $G_0=S_{28} (4,8; 1,-1 )$   and $G_1=S_{28} (2,6; 3,-7)$  showed in Figure \ref{case28}. They are a suitable pair, that is,  both graphs are  $4$-regular, have girth five and have disjoint sets of Cayley colors, concretely  $\pm \{ 1,  4,  8\}$ and $\pm \{ 2, 3, 6,  7\},$ respectively. Hence, the $33$-regular graph $\mathcal{L}_{29} (G_0,G_1)$  has girth 5 and order 1680. We have generated the graph $\mathcal{L}_{29} (G_0,G_1)$ and observed that it has diameter $4$.
\begin{figure}[h!]
 \centering
\includegraphics[width=0.75\textwidth]{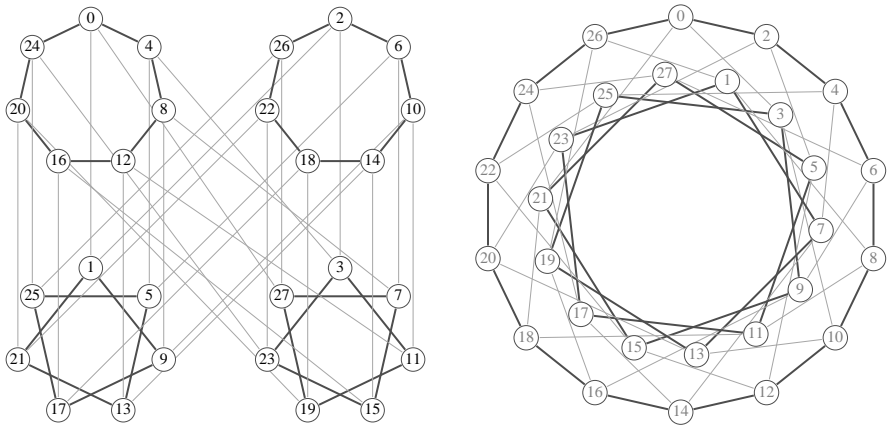}
\caption{\small   $S_{28} (4,8; 1,-1 )$   and $S_{28} (2,6; 3,-7)$, a pair of suitable graphs over $\mathbb{Z}_{28}.$ }
\label{case28}
\end{figure}
\noindent Deletion and inequality (\ref{eq:delete2})  provide entries $k= 32, 33$ of Table \ref{table3}.

\item For $q=31$:

There exist four $(5,5)$-cages (see \cite{M99, R69, YZ89,W73, W82} )  and the  graph $G_0=S_{30} (6, 12; 1,-1,  9)$ is isomorphic to the Foster one. The second suitable half $G_1$  has been found  with the following relabeling of the vertices.

{\scriptsize
\begin{center}
\begin{tabular}  {
  |  c    |  c    |   c       |  c      |   c     |  c  |   }
\hline
    $G_0  \    \leftrightarrow  \   G_1 $    &   $G_0  \    \leftrightarrow  \  G_1 $   &    $G_0  \    \leftrightarrow  \   G_1 $   & $G_0   \   \leftrightarrow  \  G_1 $ &    $G_0  \    \leftrightarrow  \   G_1 $       &   $G_0  \   \leftrightarrow  \   G_1 $   \\
       \hline \hline

$ \  0    \  \leftrightarrow \     0 \  $ &                1   $ \leftrightarrow$    28     &         2 $ \leftrightarrow$   1    &
 3  $ \leftrightarrow$   27   &         4     $ \leftrightarrow$     2 &            5  $ \leftrightarrow$    19  \\
\hline
$ \   6   \  \leftrightarrow  \    4 \   $ &               7  $ \leftrightarrow$    7       &         8 $ \leftrightarrow$   5  &
 9  $ \leftrightarrow$    22  &          10   $ \leftrightarrow$  6   &         11  $ \leftrightarrow$ 3\\
\hline
$  12    \  \leftrightarrow \   8 \  $   &               13  $ \leftrightarrow$     24      &         14 $ \leftrightarrow$   9   &
   15  $ \leftrightarrow$   20     &       16  $ \leftrightarrow$  10   &        17  $ \leftrightarrow$ 15 \\
\hline
$ 18   \  \leftrightarrow  \  12  $  &              19  $ \leftrightarrow$    23      &          20 $ \leftrightarrow$   13   &
  21  $ \leftrightarrow$  11    &          22  $ \leftrightarrow$ 14  &         23  $ \leftrightarrow$ 29 \\
\hline
$ 24    \  \leftrightarrow \  16   $ &              25  $ \leftrightarrow$     21      &          26 $ \leftrightarrow$   17   &
   27  $ \leftrightarrow$   25    &        28  $ \leftrightarrow$   18 &            29  $ \leftrightarrow$ 26 \\
       \hline
\end{tabular}
\end{center}}

\noindent Since the  Cayley colors of $G_1$ are the elements of the set $\mathbb{Z}_{30} - \{ 0,\pm1,\pm 6,\pm 9,  \pm 12\}$, the graphs $G_0$ and $G_1$ have disjoint Cayley colors, and therefore, the amalgam graph $ \mathcal{L}_{31} ( G_0,G_1)$ has girth $5$, regularity $36$ and order $2(31^2-1)=1920.$
Block deletion provides $n(35,5)  \leq 1860$ and $n(34,5) \leq 1800$.

\item For $q=37$:  

Consider the graphs $G_0=S_{36} (8, 14;  1,-1, 11)$ and $G_1=S_{36} (2,4;  3,-7,15)$   defined on the cyclic group
$( \mathbb{Z}_{36} , +)$. Both graphs are $5$-regular, have girth five and disjoint Cayley colors, concretely  $ \pm \{ 1,  8,  11,  14\}$ and $ \pm \{  2,  3, 4, 7, 15 \},$ respectively. Hence, the $42$-regular graph $\mathcal{L}_{37} (G_0,G_1)$  has girth 5 and order 2736.
Deletion provides $n(41,5)  \leq 2664$,  $n(40,5)  \leq 2592$,  $n(39,5)  \leq 2520$,  $n(38,5)  \leq 2448$.

\item For $q=41$:

The   $(6,5)$-cage is unique and it is well known (see \cite{KW79}) that   it   can be constructed by removing the vertices of a Petersen graph from the Hoffman-Singleton cage. We present a construction of the $(6,5)$-cage as the graph $S_{40}(8, 16; 1,-1,5,-13 )$. We denote it by $G_0$.
Due to the uniqueness of this cage, we construct a suitable graph $G_1$ according to the following relabeling of the vertices.

{\scriptsize
\begin{center}
\begin{tabular}  {
  |  c    |  c    |   c       |  c      |   c     |  c  |   c     |  c  |   }
\hline
    $G_0  \    \leftrightarrow  \   G_1 $    &   $G_0  \    \leftrightarrow  \  G_1 $   &    $G_0  \   \leftrightarrow  \   G_1 $   & $G_0   \   \leftrightarrow  \  G_1 $ &    $G_0  \    \leftrightarrow  \   G_1 $       &   $G_0  \   \leftrightarrow  \   G_1 $   &   $G_0  \   \leftrightarrow  \   G_1 $    &   $G_0  \   \leftrightarrow  \   G_1 $ \\
       \hline \hline

$ \  0    \  \leftrightarrow \   \  0$               &      $ \; \  1  \   \leftrightarrow \    12  $                 &        $\, \  2  \  \leftrightarrow \   \   1  $  &
$ \  3  \  \leftrightarrow  \  20  $                &      $  \  4   \  \leftrightarrow  \   \  2   $               &        $ \   5 \   \leftrightarrow   \   33   $  &
$ \  6  \  \leftrightarrow \  \    3   $             &      $  \ 7  \   \leftrightarrow   \  37  $  \\
\hline
$ \   8   \  \leftrightarrow \     \  7    $    &    $  \; \   9  \  \leftrightarrow  \   38    $      &      $   10 \   \leftrightarrow  \  \  8  $   &
$  11 \   \leftrightarrow   \  18 $                 &       $   12  \   \leftrightarrow  \  \  9   $           &      $   13 \   \leftrightarrow \  32  $  &
$14  \   \leftrightarrow  \   10  $                    &        $    15  \  \leftrightarrow  \   25 $\\
\hline
$  16    \  \leftrightarrow \  14  $   &       $        17  \  \leftrightarrow   \ \  5  $         &        $ 18  \  \leftrightarrow   \ 15  $  &
$   19 \   \leftrightarrow  \  13    $          &       $ 20 \   \leftrightarrow  \  16           $        &        $ 21 \   \leftrightarrow   \ 36    $  &
$  22  \  \leftrightarrow  \   17 $             &        $ 23 \   \leftrightarrow    \  27   $   \\
\hline
$ 24   \  \leftrightarrow \    21  $  &          $    25 \  \leftrightarrow   \  19   $  &         $ 26  \  \leftrightarrow  \  22  $ &
 $ 27  \   \leftrightarrow  \ 11  $  &          $28  \   \leftrightarrow \  23 $                &         $29  \  \leftrightarrow \  35   $  &
$   30  \  \leftrightarrow    \  24   $      &          $  31  \   \leftrightarrow    \ 39 $\\
\hline
$ 32    \  \leftrightarrow \  28   $ &           $  33  \  \leftrightarrow    \  26    $  &         $ 34  \  \leftrightarrow   \ 29 $  &
 $  35  \  \leftrightarrow \ \   4   $ &       $ 36  \  \leftrightarrow   \ 30 $  &        $    37  \  \leftrightarrow \ 34   $  &
  $  38   \  \leftrightarrow \ 31$       &            $39   \  \leftrightarrow  \ \   6  $\\
       \hline
\end{tabular}
\end{center}}

 \noindent Since  $G_0$ and $G_1$ have no Cayley color in common, the $47$-regular graph $\mathcal{L}_{41} (G_0,G_1)$  has girth 5 and order $2(41^2-1)=3360$.
Deletion   and inequality (\ref{eq:delete2})  provide entries $k= 43 \ldots 46$ of Table~\ref{table3}.

\item For $q=43, 47$:

Since we resort to $5$-regularity, there exist several pairs of suitable graphs. For the sake of uniformity, we consider the graphs $S_{q-1} (6,12;1,-1,9)$ and $S_{q-1} (2, 4;3,-7, 15).$  It proves that  $n(48,5) \le2(43^2-1)= 3696$ and  $n(52,5) \le2(47^2-1)= 4416$.
As a curiosity, let us mention that the graph $S_{42}(1,-1,-7,11,15)$ is the $(5,6)$-cage and it forms a suitable pair with  $S_{42}(2,4; 5,-5,17)$.

\end{itemize}

\noindent Based on the above constructions  and recalling that it is possible to delete blocks of points and lines  we can write the following theorem.

\begin{theorem}
The following upper bound on the order  of a $k$-regular graph of girth $5$ holds
{
\begin{center}
\begin{tabular}   {  | c    || c  |  }
\hline
 $k$ &   $rec(k,5)$     \\
       \hline \hline
       $32, 33$ & $56 (k-3)$ \\ \hline
  $   34,35,36 $   &    $60 (k-4) $   \\
  \hline
   $  38, \ldots, 42   $&  $  72 (k-4)   $ \\
       \hline
       $43, \ldots, 47$ & 80 (k-5)  \\ \hline
        $ 48   $&  $  3696  $ \\
       \hline
  $  49, \ldots, 52   $&  $  92 (k-4)   $ \\
       \hline
\end{tabular}
\end{center}}

\end{theorem}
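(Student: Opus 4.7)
The plan is to assemble the six suitable pairs of $r$-regular graphs of girth~$5$ supplied by Construction~$2$ and run them through Funk's amalgamation into the Levi graph $L_q$, followed by the block deletion described just after equation~(\ref{eq:delete2}). Concretely, for each prime power $q\in\{29,31,37,41,43,47\}$ the construction exhibits a pair $(G_0,G_1)$ on the cyclic group $(\mathbb{Z}_{q-1},+)$ that is $r$-regular, has girth at least five, and has disjoint sets of Cayley colors. The relevant values are $r=4$ at $q=29$, $r=5$ at $q\in\{31,37,43,47\}$, and $r=6$ at $q=41$. For the parametric halves of the form $S_{q-1}(P,Q;k_1,\ldots,k_w)$, Lemmas~\ref{oddsemicirculant} and \ref{semicirculant} (and Lemma~\ref{lemmaspan} whenever the span comes close to $q-1$) guarantee girth at least five and immediately list the Cayley colors; for the exceptional halves at $q=31$ and $q=41$, which are isomorphic respectively to the Foster $(5,5)$-cage and to the unique $(6,5)$-cage, the tabulated relabelings do the work.

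With each suitable pair in hand, Funk's amalgamation of~\cite{F09} yields a $(q+r)$-regular graph $\mathcal{L}_q(G_0,G_1)$ of girth~$5$ on $2(q^2-1)$ vertices. This already produces the extremal entries of the table: $k=33$ with $56\cdot 30=1680$, $k=36$ with $60\cdot 32=1920$, $k=42$ with $72\cdot 38=2736$, $k=47$ with $80\cdot 42=3360$, $k=48$ with $3696=2(43^2-1)$, and $k=52$ with $92\cdot 48=4416$. To descend to a smaller regularity $k<q+r$ I would iterate the block deletion of one point block $P_x$ together with one line block $L_m$: each such step lowers the regularity by exactly one and the order by $2(q-1)$, so after $q+r-k$ iterations one obtains a $k$-regular graph of girth~$5$ on $2(q-1)(k-r+1)$ vertices. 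Specializing to the five values of $(q,r)$ above recovers the formulas $56(k-3)$, $60(k-4)$, $72(k-4)$, $80(k-5)$ and $92(k-4)$, and matching the admissible range of $k$ against each $q$ fills every row of the table.

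The main obstacle is not the amalgamation or the deletion, which are off-the-shelf tools once (\ref{eq:delete2}) is available, but the verification that the specific relabelings displayed at $q=31$ and $q=41$ genuinely produce a mate whose Cayley colors avoid those of the cage. This is a finite computational check on the edge lists, but the complement sets $\mathbb{Z}_{30}\setminus\{0,\pm 1,\pm 6,\pm 9,\pm 12\}$ and the analogous forbidden set at $q=41$ are precisely the colors already excluded by $G_0$, so the disjointness can be read off the relabeling tables once they are translated back to differences. For the remaining parametric pairs the Cayley colors are spelled out explicitly in Construction~$2$, so disjointness is immediate by comparison of the listed sets, and the construction of the table is complete.
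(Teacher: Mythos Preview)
Your proposal is correct and follows essentially the same approach as the paper: invoke the six suitable pairs from Construction~2, amalgamate into $L_q$ to obtain the $(q+r)$-regular graph of girth~$5$ on $2(q^2-1)$ vertices, and then apply block deletion via inequality~(\ref{eq:delete2}) to descend to the required regularities. The paper's own proof is in fact nothing more than the one-line remark ``Based on the above constructions and recalling that it is possible to delete blocks of points and lines we can write the following theorem,'' so your write-up is already more detailed than what appears there.
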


\noindent To finalize this section we prove Theorem \ref{th11}. In this case we generate a pair of  6-regular suitable graphs to be amalgamated into $L_q$, for  an odd prime power $q \ge 49$. We start with $q=49$; notice that this case is sharp because the Hoffman-Singleton graph is the cage that attains the lower bound $n_0(7,5)=50$ (see~\cite{HS60})

\noindent{\bf{Theorem 1.1}}  {\it{Given an integer $k \ge 53$, let $q$ be the lowest odd prime power, such that  $k \leq q+6$. Then  $n(k,5) \leq 2(q-1)(k-5)$.}}

\begin{proof}
First consider $q=49$. Add to the $4$-regular bipartite graph $ S_{48}(1,-1,5,-13)$ the edges $\{2v, 2v+8 \}$ over the even vertices of $\mathbb{Z}_{48}$, and the four cycles $ \{1+i, 17+i, 41+i, 25+i, 9+i, \\   33+~i, 1+i \}$, for $i=0,2,4,6$,  over the odd vertices.  We call $G_0$ to this $(6,5)$-graph.
 To construct a suitable graph $G_1$, we resort to the following relabeling of the vertices

{\scriptsize
\begin{center}
\begin{tabular}  {
  |  c    |  c    |   c       |  c      |   c     |  c  |   c     |  c  |   }
\hline
    $G_0  \    \leftrightarrow  \   G_1 $    &   $G_0  \    \leftrightarrow  \  G_1 $   &    $G_0  \   \leftrightarrow  \   G_1 $   & $G_0   \   \leftrightarrow  \  G_1 $ &    $G_0  \    \leftrightarrow  \   G_1 $       &   $G_0  \   \leftrightarrow  \   G_1 $   &   $G_0  \   \leftrightarrow  \   G_1 $    &   $G_0  \   \leftrightarrow  \   G_1 $ \\
       \hline \hline

$ \  0    \  \leftrightarrow \   \  0$   &      $ \; \  1  \   \leftrightarrow \    42  $   &      $\, \  2  \  \leftrightarrow \ \   1  $  &
$ \  3  \  \leftrightarrow  \  39  $     &      $  \  4   \  \leftrightarrow  \  \  2   $   &      $\, \ 5 \ \leftrightarrow   \   23   $  &
$ \  6  \  \leftrightarrow \  \  3 $   &      $  \ 7  \   \leftrightarrow   \  47  $  \\
\hline
$ \ 8 \  \leftrightarrow \ \  6  $     &    $  \; \   9  \  \leftrightarrow  \ \ 4  $     &      $   10 \   \leftrightarrow  \  \  7  $   &
$ 11 \   \leftrightarrow   \  28 $     &    $   12  \   \leftrightarrow  \  \  8   $       &      $   13 \   \leftrightarrow \  34  $  &
$ 14  \   \leftrightarrow \ \ 9  $   &    $    15  \  \leftrightarrow  \   43 $\\
\hline
$ 16    \  \leftrightarrow \  12  $    &    $ 17  \  \leftrightarrow   \ 35  $         &       $ 18  \  \leftrightarrow   \ 13  $  &
$ 19 \   \leftrightarrow  \  36  $     &    $ 20 \   \leftrightarrow  \  14      $         &      $ 21 \   \leftrightarrow   \ 29 $  &
$ 22  \  \leftrightarrow  \   15 $     &    $ 23 \   \leftrightarrow    \ 44   $   \\
\hline
$ 24   \  \leftrightarrow \ 18  $      &    $ 25 \  \leftrightarrow  \  37 $            &      $ 26  \  \leftrightarrow  \  19  $ &
$ 27  \   \leftrightarrow  \ \ 5  $     &    $ 28  \   \leftrightarrow \  20 $               &      $29  \  \leftrightarrow \  40   $  &
$ 30  \ \leftrightarrow  \  21   $     &    $  31  \   \leftrightarrow    \ 10 $\\
\hline
$ 32  \  \leftrightarrow \  24   $     &    $ 33  \  \leftrightarrow    \  45   $       &      $ 34  \  \leftrightarrow   \ 25 $  &
$ 35  \  \leftrightarrow \ 46 $     &    $ 36  \  \leftrightarrow   \ 26 $            &      $ 37  \  \leftrightarrow \ 38   $  &
$ 38  \  \leftrightarrow \ 27$        &    $ 39   \  \leftrightarrow  \ 16  $\\
\hline
$ 40  \  \leftrightarrow \  30   $    &       $ 41  \  \leftrightarrow  \ 41    $       &      $ 42  \  \leftrightarrow   \ 31 $  &
$ 43  \  \leftrightarrow \ 17   $      &       $ 44  \  \leftrightarrow   \ 32 $            &      $ 45  \  \leftrightarrow \ 11   $  &
$ 46  \  \leftrightarrow \ 33$        &       $ 47   \  \leftrightarrow  \ 22  $\\
\hline
\end{tabular}
\end{center}}
\noindent The graphs $G_0$ and $G_1$ have disjoint Cayley colors, namely $w(G_0)=\pm \{  1,  5, 8, 13, 16, 24 \}$ and
 $w(G_1)=\mathbb{Z}_{48} -( w(G_0)   \cup   \{0 \} )$.
Hence, $G_0$ and $G_1$ is a suitable pair of graphs for amalgamation into $L_{49}$.  Using these graphs and also the fact that we can delete blocks of points and lines we prove the theorem for $53\leq k \leq 55$.

\noindent When $q \in \{ 53, 67, 71, 79, \ldots\} $ is an odd prime power, we consider the $6$-regular graphs $G_0= S_{q-1}( 8,16; 1, -1, 5,-13 )$  and
$G_1 = S_{q-1}( 2,4; 3, -7, 15, -21 )$. Direct checking shows their suitability over $L_q$ for $q=53,67,71$.  When $q  \ge  79$,  the suitability of $G_0$ and~$G_1$ is a consequence of  Lemma~\ref{lemmaspan}, because  the infinite graphs  $ S_{\infty}( 8,16; 1, -1, 5,-13 )$ and $ S_{\infty}( 2,4; 3, -7, 15, -21 )$ have girth $5$ and spans  $32$ and $37$, respectively.
When \emph{$q \in \{ 59, 61, 73 \} $},  the graph $G_0= S_{q-1}( 8,16; 1, -1, 5,-13 )$ combined with
$G_1 = S_{q-1}( 2,4; 3, -7, 15, \alpha )$, where $\alpha=-23 $ for $q=59, 73$ and $\alpha=-25 $ for $q=61$, is a suitable pair of graphs over $L_q.$
Therefore, for $ q \ge 49$, the $(q+6)$-regular graph $\mathcal{L}_{q} (G_0,G_1)$  has girth at least~$5$  and order $2 (q^2-1) $.   Also, according to inequality~(\ref{eq:delete2}),  $n(k,5)  \le  2(q-1)  (k-5)$, for regularities $ 56 \le k \le q+6.$
 \end{proof}

\noindent Notice that Theorem~\ref{th11} improves J{\o}rgensen's result $n(q+\lfloor{\frac{\sqrt{q-1}}{4}}\rfloor,5)\leq 2(q^2-1)$ (see \cite{J05}) for $k \leq 577$ and ties with it for $ 578 \leq k \leq 779$.

 \section{General constructions for $q=2^m$.}

 In this section we work with the same ideas used in the two previous sections. We amalgamate into $C_q$ for $q=2^m$ when $m\geq 5$ applying Theorem \ref{teoC} on regular graphs. The case $m=4$ is considered in Section 3, where we amalgamate bi-regular graphs.

\noindent First, we deal with $m=5$ or $q=32$. Since an $r$-regular graph with $32$ vertices and girth $5$ can reach at most $5$-regularity, we have the following sharp result:

\begin{theorem}  \label{case32}
There exists a $37$-regular graph with girth $5$ and order $2048.$
\end{theorem}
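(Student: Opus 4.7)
The plan is to apply Theorem \ref{teoC} in its purely regular form (i.e.\ with $H_i=G_i$), taking $q=32$ and $r=5$. What must be supplied is a pair of $5$-regular graphs $G_0,G_1$ on vertex set $\mathbb{F}_{32}$, each of girth at least $5$, whose sets of Cayley colors in $\mathbb{F}_{32}$ are disjoint. Once such a pair exists, the amalgam $\mathcal{C}_{32}(G_0,G_1)$ is automatically $(32+5)=37$-regular, of girth at least $5$, and has order $2\cdot 32^2=2048$. The sharpness remark in the statement is explained by the Moore-type bound: since $n_0(5,5)=26\le 32<37=n_0(6,5)$, no $6$-regular graph of girth $5$ exists on $32$ vertices, so the parameter $r=5$ is the largest one that can possibly feed the amalgamation machinery for $q=32$.

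To produce $G_0,G_1$ I would exploit the additive identification $\mathbb{F}_{32}\cong(\mathbb{Z}_2)^5$, in which every element is its own inverse, so a Cayley color $\pm(\alpha-\beta)$ collapses to the single element $\alpha+\beta\in\mathbb{F}_{32}\setminus\{0\}$. Take $G_i=\mathrm{Cay}(\mathbb{F}_{32},S_i)$ for a $5$-element connection set $S_i\subset\mathbb{F}_{32}\setminus\{0\}$; the resulting graph is automatically $5$-regular, and each element of $S_i$ is one Cayley color contributing a perfect matching. In this Cayley setting, disjointness of the colors is literally $S_0\cap S_1=\emptyset$, while girth at least $5$ reduces to two purely arithmetic conditions on each $S_i$: triangle-freeness, $(S_i+S_i)\cap S_i=\emptyset$, and the Sidon condition, namely that all $\binom{5}{2}=10$ sums $a+b$ with $a\ne b$ in $S_i$ be distinct.

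A natural first choice is $S_0=\{e_1,e_2,e_3,e_4,e_5\}$, the coordinate basis of $(\mathbb{Z}_2)^5$: the pairwise sums $e_i+e_j$ are the ten weight-$2$ vectors, which are pairwise distinct and disjoint from $S_0$, so both conditions hold. For $S_1$ one searches inside the $26$-element complement $\mathbb{F}_{32}\setminus(S_0\cup\{0\})$ for another $5$-subset satisfying the same two arithmetic constraints; this is a short back-tracking (or exhaustive) finite computation, and the crude count that every admissible $S_i$ occupies only $1+5+10=16$ elements of the group leaves ample room, so existence is essentially automatic.

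Once a concrete pair $(S_0,S_1)$ has been displayed and the triangle-free and Sidon conditions verified for each, Theorem \ref{teoC} is invoked verbatim and outputs the desired $(37,5)$-regular graph of order $2048$. The only genuine work is the construction step, so I expect the main obstacle to be the explicit combinatorial search for the pair of disjoint Sidon-type connection sets; the subsequent verification and the appeal to Theorem \ref{teoC} are routine and carry no further conceptual content.
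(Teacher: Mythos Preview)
Your overall framework is correct: one applies Theorem~\ref{teoC} with $q=32$, $r=5$, and the whole point is to exhibit two $5$-regular graphs $G_0,G_1$ on $(\mathbb{Z}_2)^5$ of girth $\ge 5$ with disjoint Cayley colors. However, your proposed construction of $G_0,G_1$ as Cayley graphs $\mathrm{Cay}((\mathbb{Z}_2)^5,S_i)$ cannot work, and the error is in your claim that ``girth at least $5$ reduces to triangle-freeness plus the Sidon condition''. In an elementary abelian $2$-group every element is its own inverse, so for \emph{any} two distinct $a,b\in S_i$ the four vertices $0,\,a,\,a+b,\,b$ form a $4$-cycle: the successive differences are $a,b,a,b\in S_i$, and the vertices are distinct because $a\ne 0$, $b\ne 0$, $a\ne b$, and $a+b\ne 0$ (again since $a\ne b=-b$). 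Thus every Cayley graph on $(\mathbb{Z}_2)^5$ with $|S|\ge 2$ has girth at most $4$. Your Sidon condition only rules out relations $a+b=c+d$ with $\{a,b\}\ne\{c,d\}$; it does nothing against the ``diagonal'' relation $a+b=a+b$, which in characteristic~$2$ already produces a genuine $4$-cycle.

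This is not a cosmetic issue: it means the pair $(G_0,G_1)$ you are looking for simply does not exist among Cayley graphs on $(\mathbb{Z}_2)^5$, and no finite search over connection sets will find one. The paper gets around this by using graphs that are \emph{not} Cayley graphs: its $G_0$ is an explicit $(5,5)$-graph on $32$ vertices given by an adjacency list, and it has nine Cayley colors in $(\mathbb{Z}_2)^5$ (not five). The companion $G_1$ is obtained from $G_0$ by a carefully chosen relabelling of the vertices, and it turns out to use $21$ Cayley colors, disjoint from the nine of $G_0$. The construction step therefore requires a genuinely ad hoc search for a labelled $(5,5)$-graph on $32$ vertices together with a suitable automorphism-free relabelling, which is substantially harder than the Sidon-set search you outlined.
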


\begin{proof}
As in the case $q=16$,  denote the elements of  $(\mathbb{F}_{32},+) \cong  ( (\mathbb{Z}_2)^5,+)$ by  $defgh$ instead of $ \{d,e,f,g,h \}$.
Let $G_0$ be the $(5,5)$-graph with order $32$ and with the following adjacency list:

{\scriptsize
\begin{center}
\begin{tabular}  {
  |  c    |  c    |   c       |  c      |   c     |  c  |   }
\hline
    Vertex    &   Adjacent vertices  &    & Vertex  &   Adjacent vertices         \\
       \hline \hline
 00000  & 10000,  11010,  11100, 00001, 11111   & &   00001  & 00000,  10001,  11011,  11101,  11110 \\
 10000  & 00000,  01011,  01101,  01110,  11001  & &  10001 & 00001, 01010, 01100, 01111, 11000  \\
 01000  & 01001, 10010, 10101, 10110, 11000      & &  01001  &  01000, 10011, 10100, 10111, 11001 \\
11000   & 00011, 00100, 00111, 01000, 10001      & &  11001  & 00010, 00101, 00110, 01001, 10000  \\
00100   & 00101, 10100, 11000, 11010, 11110      & &   00101  & 00100, 10101, 11001, 11011, 11111 \\
10100   & 00100, 01001, 01011, 01111, 11101     & &  10101 & 00101, 01000, 01010, 01110, 11100  \\
01100  & 01101, 10001, 10011, 10110, 11100      & &  01101  &  01100, 10000, 10010, 10111, 11101 \\
11100   & 00000, 00010, 00111, 01100, 10101     & &  11101  & 00001, 00011, 00110, 01101, 10100  \\
00010  & 00011, 10010, 11001, 11100, 11110      & &   00011  & 00010, 10011, 11000, 11101, 11111 \\
10010  & 00010, 01000, 01101, 01111, 11011      & &  10011 & 00011, 01001, 01100, 01110, 11010  \\
01010  & 01011, 10001, 10101, 10111, 11010      & &  01011  &  01010, 10000, 10100, 10110, 11011 \\
11010   & 00000, 00100, 00110, 01010, 10011      & &  11011  & 00001, 00101, 00111, 01011, 10010  \\
00110   & 00111, 10110, 11001, 11010, 11101      & &   00111  & 00110, 10111, 11000, 11011, 11100 \\
10110   & 00110, 01000, 01011, 01100, 11111     & &  10111 & 00111, 01001, 01010, 01101, 11110  \\
01110  & 01111, 10000, 10011, 10101, 11110      & &  01111  &  01110, 10001, 10010, 10100, 11111 \\
11110  & 00001, 00010, 00100, 01110, 10111      & &  11111  &  00000, 00011, 00101, 01111, 10110 \\
\hline
\end{tabular}

\end{center}}

\noindent The set
 $ w(G_0)= \{ 00001, 01001,10000, 11010, 11011,  11100, 11101,  11110,  11111\}$ contains the  Cayley colors  of $G_0$.
As graph $G_1$,  consider  the isomorphic graph of $G_0$ with the following relabeling of the vertices:

{\scriptsize
\begin{center}
\begin{tabular}  {
  |  c    |  c    |   c       |  c      |   c     |  c  |   }
\hline
    $G_0  \ \   \leftrightarrow  \ \  G_1 $    &   $G_0  \ \   \leftrightarrow  \ \  G_1 $   &    $G_0  \ \   \leftrightarrow  \ \  G_1 $   & $G_0  \ \   \leftrightarrow  \ \  G_1 $ &    $G_0  \ \   \leftrightarrow  \ \  G_1 $       &   $G_0  \ \   \leftrightarrow  \ \  G_1 $   \\
       \hline \hline

00000  $ \leftrightarrow$   00000  &                00001   $ \leftrightarrow$    00011     &         00010 $ \leftrightarrow$   00010    &
 00011  $ \leftrightarrow$   00001   &         00100     $ \leftrightarrow$     00100 &            00101  $ \leftrightarrow$    00111\\
\hline
00110    $ \leftrightarrow$  00110  &               00111  $ \leftrightarrow$    01110       &         01000 $ \leftrightarrow$   11001  &
 01001  $ \leftrightarrow$    11100  &          01010   $ \leftrightarrow$  11111   &         01011  $ \leftrightarrow$ 11011\\
\hline
01100    $ \leftrightarrow$ 10011   &               01101  $ \leftrightarrow$     11101      &         01110 $ \leftrightarrow$   11010   &
   01111  $ \leftrightarrow$   11110     &       10000  $ \leftrightarrow$  01111   &        10001  $ \leftrightarrow$ 10100 \\
\hline
10010  $ \leftrightarrow$  01100    &              10011  $ \leftrightarrow$    10000      &          10100 $ \leftrightarrow$   01000   &
  10101  $ \leftrightarrow$  10001    &          10110  $ \leftrightarrow$ 01010  &         10111  $ \leftrightarrow$ 11000 \\
\hline
11000   $ \leftrightarrow$ 10110    &              11001  $ \leftrightarrow$     01101      &          11010 $ \leftrightarrow$   10101   &
   11011  $ \leftrightarrow$   01001    &        11100  $ \leftrightarrow$   00101 &            11101  $ \leftrightarrow$ 01011 \\
\hline
11110   $ \leftrightarrow$  10111    &               11111  $ \leftrightarrow$  10010         &                                   &              &           &  \\
       \hline
\end{tabular}
\end{center}}

\noindent Since the set of  Cayley colors of $G_1$ is $w(G_1)=\mathbb{F}_{32} - (w(G_0) \cup  \{0000,00110 \}  )$, the graphs $G_0$ and $G_1$ have disjoint Cayley colors, and therefore, the amalgam graph $ \mathcal{C}_{32} ( G_0,G_1)$ has girth $5$, regularity $37$ and order $2 \cdot 32^2=2048.$
\end{proof}

\noindent To give a general result for $m\geq 6$ we need some equivalences and definitions. As usual we identify the elements of $\mathbb{F}_{2^m} \cong  (\mathbb{Z}_2)^m $ with a number of $\mathbb{Z}_{2^m} $ in the following way:
$$(v_{m-1}, \ldots, v_{0} ) \; \longleftrightarrow  \; \displaystyle \sum_{ i =0} ^{ m-1} 2^i v_i$$
for every $i=0, \ldots, m-1$ and  $v_i \in \mathbb{Z}_2.$
This induces a bijection $\phi:  \mathbb{Z}_{2^m}  \rightarrow (\mathbb{Z}_{2} )^m  $ such that the elements of  $(\mathbb{Z}_{2} )^m$ can be represented either by a vector or by a number.

\noindent This bijective relationship allows to translate the graph $ S_{2^m} (P,Q; k_1, \ldots, k_w)$ with vertex set $\mathbb{Z}_{2^m}$ into a new graph with vertex set $(\mathbb{Z}_{2} )^m$ defined as follows:

\begin{definition}
Given an integer $N =2^m $,   a sequence $k_1,  \ldots, k_w$  of different odd elements from  $\mathbb{Z}_N$ and two even elements $0 < P, Q  <  N / 2  $,
we denote by $ \bar{S}_{2^m} (P,Q; k_1, \ldots, k_w)$ the graph with vertex set $(\mathbb{Z}_{2} )^m$ obtained by translating  the vertices and edges of $ S_{2^m} (P,Q; k_1, \ldots, k_w)$  by means of the bijection  $\phi: \mathbb{Z}_{2^m}   \rightarrow (\mathbb{Z}_{2} )^m.$
\end{definition}

\noindent Clearly, graphs $ S_{2^m} (P,Q; k_1, \ldots, k_w)$ and $ \bar{S}_{2^m} (P,Q; k_1, \ldots, k_w)$ are isomorphic. Notice that the Cayley colors of the graph $ \bar{S}_{2^m} (P,Q; k_1, \ldots, k_w)$ are  computed in the additive group $ (\mathbb{Z}_{2} )^m$; which implies that edges of $ \bar{S}_{2^m} (P,Q; k_1, \ldots, k_w)$ associated to an element of $\{P,Q; k_1, \ldots, k_w\}$ might have different Cayley colors in $ (\mathbb{Z}_{2} )^m$.

\noindent To finish this section we prove Theorem \ref{th12}, in which we consider even primes $q \ge 64$ and construct a pair of suitable $6$-regular graphs whose amalgamation into $C_{q}$  establishes a general bound on $n(k,5)$ for regularities $68 \leq k \leq q+6.$

\noindent {\bf{Theorem 1.2}}  {\it {Given an integer $k \ge 68$, let $q=2^m$ be the lowest even prime, such that $k \leq q+6$. Then  $n(k,5) \leq 2 q (k-6)$.}}

\begin{proof}
Consider $q=2^m$ for an integer $m \ge 6$.  Due to the bijection $\phi$ described above we represent the elements of $(\mathbb{Z}_2)^m $ by the numbers of $\mathbb{Z}_{2^m}$ and vice versa.

\noindent For $q=64$ we consider the $6$-regular graph $G_0= \bar{S}_{64} ( 4,8; 1,3, 41,47) $ of girth five  and set of Cayley colors  $w(G_0)=\{1, 3, 4, 7, 8, 12, 15, 19, 23, 24, 25, 28, 31, 41, 47, 51, 55, 56, 57, 60, 63\} .$ To obtain the graph $G_1$ we consider the following relabeling of the vertices:

{\scriptsize
\begin{center}
\begin{tabular}  {
  |  c    |  c    |   c       |  c      |   c     |  c  |   c     |  c  |   }
\hline
    $G_0  \    \leftrightarrow  \   G_1 $    &   $G_0  \    \leftrightarrow  \  G_1 $   &    $G_0  \   \leftrightarrow  \   G_1 $   & $G_0   \   \leftrightarrow  \  G_1 $ &    $G_0  \    \leftrightarrow  \   G_1 $       &   $G_0  \   \leftrightarrow  \   G_1 $   &   $G_0  \   \leftrightarrow  \   G_1 $    &   $G_0  \   \leftrightarrow  \   G_1 $ \\
       \hline \hline

$ \  0    \  \leftrightarrow \   \  0$   &      $ \; \  1  \   \leftrightarrow \    44  $   &      $\, \  2  \  \leftrightarrow \ \   2  $  &
$ \  3  \  \leftrightarrow  \  39  $     &      $  \  4   \  \leftrightarrow  \  \  5   $   &      $\, \ 5 \ \leftrightarrow   \   41   $  &
$ \  6  \  \leftrightarrow \  \  7 $   &      $  \ 7  \   \leftrightarrow   \  19  $  \\
\hline
$ \ 8 \  \leftrightarrow \  12  $     &    $ \; \    9  \  \leftrightarrow  \  50  $     &      $   10 \   \leftrightarrow  \   14  $   &
$ 11 \   \leftrightarrow   \  28 $     &    $   12  \   \leftrightarrow  \  \  1   $       &      $   13 \   \leftrightarrow \  52  $  &
$ 14  \   \leftrightarrow \ \ 3  $   &    $    15  \  \leftrightarrow  \   21 $\\
 \hline
$ 16    \  \leftrightarrow \ \ 4  $    &    $ 17  \  \leftrightarrow  \ 25  $         &       $ 18  \  \leftrightarrow   \ \ 6  $  &
$ 19 \   \leftrightarrow  \  22  $     &    $ 20 \   \leftrightarrow  \  57      $         &      $ 21 \   \leftrightarrow   \ 20 $  &
$ 22  \  \leftrightarrow  \  59 $     &    $ 23 \   \leftrightarrow    \ 31   $   \\
\hline
$ 24   \  \leftrightarrow \ 24  $      &    $ 25 \  \leftrightarrow  \  45 $            &      $ 26  \  \leftrightarrow  \ 26  $ &
$ 27  \   \leftrightarrow  \ 56  $     &    $ 28  \   \leftrightarrow \ 61 $               &      $29  \  \leftrightarrow \ 48   $  &
$ 30  \ \leftrightarrow  \ 63   $     &    $  31  \   \leftrightarrow \ 29 $\\
\hline
$ 32  \  \leftrightarrow \ 32   $     &    $ 33  \  \leftrightarrow    \ 10   $       &      $ 34  \  \leftrightarrow   \ 34 $  &
$ 35  \  \leftrightarrow \ \ 8 $     &    $ 36  \  \leftrightarrow   \ 49 $            &      $ 37  \  \leftrightarrow \ 23   $  &
$ 38  \  \leftrightarrow \ 51 $        &    $ 39   \  \leftrightarrow  \ 27  $\\
\hline
$ 40  \  \leftrightarrow \  36   $    &       $ 41  \  \leftrightarrow  \ 62    $       &      $ 42  \  \leftrightarrow   \ 38 $  &
$ 43  \  \leftrightarrow \ 54   $      &       $ 44  \  \leftrightarrow   \ \ 9 $            &      $ 45  \  \leftrightarrow \ 35   $  &
$ 46  \  \leftrightarrow \ 11$        &       $ 47   \  \leftrightarrow  \ 43  $\\
\hline
$ 48  \  \leftrightarrow \ 40   $    &       $ 49  \  \leftrightarrow  \ 46    $       &      $ 50  \  \leftrightarrow   \ 42 $  &
$ 51  \  \leftrightarrow \ 30   $      &     $ 52  \  \leftrightarrow   \ 53 $            &      $ 53  \  \leftrightarrow \ 33   $  &
$ 54  \  \leftrightarrow \ 55$        &       $ 55   \  \leftrightarrow  \ 17  $\\
\hline
$ 56  \  \leftrightarrow \ 16   $    &       $ 57  \  \leftrightarrow  \ 58    $       &      $ 58  \  \leftrightarrow   \ 18 $  &
$ 59  \  \leftrightarrow \ 60   $      &     $ 60  \  \leftrightarrow  \ 13 $            &      $ 61  \  \leftrightarrow \ 47   $  &
$ 62  \  \leftrightarrow \ 15$        &       $ 63   \  \leftrightarrow \ 37 $\\
\hline
\end{tabular}
\end{center}}

\noindent   The Cayley colors of $G_1$ are $w(G_1)= \{1,\ldots, 63\}- w(G_0)- \{50\}$ and hence the $(70,5)$-graph $\mathcal{C}_{64} (G_0,G_1)$ has order $2 \cdot 64^2$.

 \noindent In general for $q=2^m$ and $m\geq 7$ we use the previous graphs $G_0$ and $G_1$ defined over $(\mathbb{Z}_2)^6 $ to construct new graphs $G^m_0$ and $G^m_1$ with vertex set $(\mathbb{Z}_2)^m$ in the following way:
The neighbors of a vertex  $(u_{m-1}, \ldots, u_{0})$ in $G^m_0$ are the six  vertices of the set $\{ (u_{m-1},...,u_6,v_5, \ldots, v_0 ) :   (u_5, \ldots ,u_0) (v_5, \ldots ,v_0)  \in E(G_0) \}.$  Similar definition holds for $G^m_1$. Graphs $G^m_0$ and $G^m_1$ are  formed by $2^{m-6}$ disconnected copies of $G_0$ and $G_1$, respectively, and therefore, both graphs are $6$-regular with girth $5$. Also, the sets of Cayley colors $w(G^m_0)= \{ (0,\ldots,0,\alpha_5, \ldots, \alpha_0) \in (\mathbb{Z}_2)^m : (\alpha_5, \ldots, \alpha_0)  \in w(G_0) \}$ and $w(G^m_2)= \{ (0,\ldots,0,\beta_5, \ldots, \beta_0) \in (\mathbb{Z}_2)^m : (\beta_5, \ldots, \beta_0) \in w(G_1) \}$ are disjoint because $w(G_0) \cap w(G_1)= \emptyset$.
Clearly, the graphs $G^m_0$ and $G^m_1$ are suitable for amalgamation into $\mathcal{C}_q$ and the  graph $\mathcal{C}_{q} (G^m_0,G^m_1)$ has regularity~$q+6$, order~$ 2 q^2 $ and girth at least five. For $k\leq q+6$ removing $q+6-k$ blocks of points and $q+6-k$ blocks of  lines we obtain a graph of order $2q^2-2q(q+6-k)$ and consequently $n(k,5) \leq 2 q (k-6)$.
\end{proof}

\noindent Clearly in this paper we improve $rec(k,5)$ for many values of $k$. As we mention at the end of Section~4 our Theorem \ref{th11} improves J{\o}rgensen's result for $k \leq 577$.
We consider that an interesting future work would be to extend Theorem~\ref{th11} to large odd prime powers and  to improve our Theorem \ref{th12} when $q$ is a power of two.

\subsection*{Acknowledgment}

Research supported by the Ministry of Education and Science,
Spain, the European Regional Development Fund (ERDF)
under project MTM2014-60127-P,   CONACyT-M\'exico under projects 178395, 166306, and  PAPIIT-M\'exico under project IN104915.



\begin{thebibliography}{99}
\setlength{\itemsep}{-2mm}
\footnotesize



\bibitem{AABL12}
{M. Abreu, G. Araujo-Pardo, C. Balbuena, D. Labbate}, Families of small
regular graphs of girth $5$, {\em Discrete Math.} \textbf{312}
(2012) 2832 -- 2842.

\bibitem{AABLL13} {M.Abreu, G. Araujo-Pardo, C. Balbuena, G. L\'opez-Ch\'avez, D. Labbate}, Biregular cages of girth $5$, {\em Electronic Journal of Combinatorics} \textbf{ 20}. Issue 1 (2013) \# P71.

\bibitem{AFLN1} M. Abreu, M. Funk, D. Labbate, V. Napolitano. On (minimal) regular graphs of girth $6$.
{\em Australas. J. Combin.} \textbf{35} (2006) 119--132.

\bibitem{AFLN2} M. Abreu, M. Funk, D. Labbate, V. Napolitano. A family of regular graphs of girth $5$.
{\em Discrete Math.} \textbf{308}(10) (2008)  1810--1815.

\bibitem{AFLN3} M. Abreu, M. Funk, D. Labbate, V. Napolitano. A $(0,1)$--matrix framework for elliptic semiplanes.
{\em Ars Combinatoria} \textbf{88} (2008) 175--191.

\bibitem{AB11} G. Araujo-Pardo, C. Balbuena.
\newblock Constructions of small regular bipartite graphs of girth 6. 
\newblock {\em Networks} \textbf{57(2)} (2011), 121--127.

\bibitem{ABH10} G.Araujo-Pardo, C. Balbuena, T. H\'eger, Finding small regular graphs of girth $6$, $8$ and $12$ as subgraphs of cages, {\em Discrete Math.} \textbf{310} (2010) 1301--1306.

\bibitem{AGMS07} G. Araujo-Pardo,  D. Gonz\'{a}lez-Moreno, J.J. Montellano, O. Serra. On upper bounds and conectivity of cages, {\em Australas J. Combin.} \textbf{38} (2007), 221--228.

\bibitem{B08} C. Balbuena, Incidence matrices of projective planes and of some regular bipartite graphs of girth 6 with few vertices, SIAM J. Discrete Math. 22 No.4, (2008) 131--1363.

\bibitem{BMSZ13}C. Balbuena, M. Miller, J. \v{S}ir\'a\v{n}, M. \v{Z}d\'imalov\'a, \emph{Large-vertex transitive graphs of diameter 2 from incidence graphs of biaffine planes}, Discrete Math. 313. No. 19, (2013) 2014--2019.


\bibitem{Bi96} N. Biggs, {\em Algebraic Graph Theory},
Cambridge University Press, New York, 1996.

\bibitem{C50}
{H.S.M. Coxeter}, Self-dual configurations and regular graphs, {\em Bulletin of the American Mathematical Society} \textbf{56}
(1950) 413 -- 455, doi:10.1090/S0002-9904-1950-09407-5.

\bibitem{Demb} P. Dembowski, {\em Finite Geometries}. Springer,
New York 1968, reprint 1997.


\bibitem{ES63} {P.~Erd\"os and H.~Sachs},
{Regul\"are Graphen gegebener Taillenweite mit minimaler
Knotenzahl}, Wiss. Z. Uni. Halle (Math. Nat.), 12 (1963)
251--257.

\bibitem{Exoo}
{G. Exoo}, Regular graphs of given degree and girth , {\em (http://ginger.indstate.edu/ge/CAGES).}


\bibitem{EJ08}
{G. Exoo, R. Jajcay, } Dynamic Cage Survey , {\em The electronic journal of combinatorics} \textbf{15}
(2008), \# DS 16, {\em  (http://www.combinatorics.org/Surveys/ds16.pdf). }


\bibitem{F09}
{M. Funk}, Girth $5$ graphs from elliptic semiplanes, {\em Note di Matematica} \textbf{29} suppl.1
(2009) 91 -- 114.

\bibitem{H04} P. Hafner,   Geometric realisation of the graphs of McKay-Miller-\v{S}ir\'{a}\v{n},
{\em J. Combin. Theory Series B} \textbf{90} (2004) 223--232.


\bibitem{HS60}
{A. J. Hoffman, R. R. Singleton}, On Moore Graphs with Diameters 2 and 3, {\em IBM
Journal, November} (1960) 497 -- 504.


\bibitem{J05}
{L.K. J{\o}rgensen}, Girth $5$ graphs from relative difference sets, {\em DiscreteMath.} \textbf{293}
(2005) 177 -- 184.


\bibitem{LU95} {F. Lazebnik and V.A. Ustimenko}, Explicit construction of graphs with an arbitrary large girth and of large size,
{\em Discrete Appl. Math.} \textbf{60} (1995) 275--284.

\bibitem{M99}
{M. Meringer}, Fast Generation of Regular Graphs and Construction of Cages, {\em J. Graph Theory} \textbf{30}
(1999) 137 -- 146.

\bibitem{KW79}
{M. O'Keefe, P.K.Wong}, A smallest graph of girth $5$ and valency $6$, {\em J. Combin. Theory} Ser. B, \textbf{26}
(1979) 145 -- 149.


\bibitem{R64}
{N. Robertson}, The smallest graph of girth $5$ and valency $4$, {\em Bull. Amer. Math. Soc.} \textbf{70}
(1964) 824 -- 825.

\bibitem{R69}
{N. Robertson}, Graphs minimal under girth, valency and connectivity constraints, {\em Dissertation, Univ. of Waterloo}
(1969).

\bibitem{Royle}
{G. Royle}, Cubic Cages, {\em (http://people.csse.uwa.edu.au/gordon/cages).}


\bibitem{S08}
{A. Schwenk}, Construction of a small regular graph of girth~$5$ and degree~$19$, {\em Conference Presentation given at Normal, II, USA}
(18. April 2008).


\bibitem{S38}
{J. Singer}, A theorem in projective geometry and some applications to number theory, {\em Trans. Amer. Math. Soc.} \textbf{43}
(1938) 377 -- 385.


\bibitem{YZ89}
{Y. S. Yang, C. X. Zhang}, A new $(5, 5)$ cage and the number of $(5, 5)$ cages (Chinese), {\em J. Math. Res. Exposition}  \textbf{9}
(1989) 628 -- 632.

\bibitem{T47} {W. T. Tutte}, A family of cubical graphs.
 {\it Proc. Cambridge Philos. Soc.}, (1947) 459--474.

\bibitem{W73}
{G. Wegner}, A smallest graph of girth $5$ and valency $5$, {\em J. Combin. Theory} Ser. B \textbf{14}
(1973) 203 -- 208.


\bibitem{W78}
{P.K. Wong}, On the uniqueness of the smallest graphs of girth $5$ and valency $6$, {\em  J. Graph Theory} \textbf{3}
(1978) 407 -- 409.


\bibitem{W82}
{P.K. Wong}, Cages-a survey, {\em  J. Graph Theory} \textbf{6}
(1982) 1 -- 22.

\bibitem{ZHP09}
{A.Zitnik, B.Horvat,T. Pisanski}, All generalized Petersen graphs are unit-distance graphs, {\em Institue of Mathematics, Physics and Mechanics} \textbf{48}
(2010) 2232 -- 2094.


\end{thebibliography}
\end{document}